\documentclass[a4paper,12pt]{amsart}

\usepackage[cp1251]{inputenc} 
\usepackage[T1]{fontenc}     
\usepackage[ngerman,english]{babel} 

 \usepackage{pdfpages}  
 
\usepackage{amsmath, amsfonts, amssymb, amsthm, bbm}

\usepackage{graphicx}
\usepackage{tikz}
\usepackage{pdfpages}
\usepackage{floatflt}
\usepackage{multicol}
\usepackage{xcolor}
\usepackage{caption}
\usepackage{subcaption}

\usepackage[margin=1.2in]{geometry}

\newtheorem{theorem}{Theorem}
\newtheorem{lemma}{Lemma}[section]
\newtheorem{proposition}{Proposition}[section]
\newtheorem{definition}{Definition}[section]
\newtheorem{remark}{Remark}[section]
\newtheorem{cor}{Corollary}[section]
\numberwithin{equation}{section}
 
\usepackage{comment}

\usepackage{hyperref}
\hypersetup{
    colorlinks=true,
    linkcolor=red,
    filecolor=gray,      
    urlcolor=cyan,
    citecolor=gray
}

\newcommand{\R}{{\mathbb{R}}}
\newcommand{\Sph}{{\mathbb{S}}}

\newcommand{\N}{{\mathbb{N}}}
\newcommand{\Z}{{\mathbb{Z}}}

\renewcommand{\d}{\partial}

 \newcommand{\F}{{\mathcal{F}}}

\newcommand{\e}{\varepsilon}

\newcommand{\V}{{\mathcal{V}}}
\newcommand{\U}{{\mathcal{U}}}
\renewcommand{\H}{{\mathcal{H}}}

\title{Submetry onto one-dimensional space}
\author{Darya Sukhorebska}
\address{\parbox{\linewidth}{Institut f\"ur Mathematik, Universit\"at Paderborn, 
Warburger Str. 100, 33098 Paderborn, Germany  \\
B.Verkin Institute for Low Temperature Physics and Engineering of the National Academy of Sciences of Ukraine, 
47 Nauky Ave., Kharkiv, 61103, Ukraine}} \email{darya.sukhorebska@uni-paderborn.de}
\date{}

\thanks{The author was partially supported by Deutsche Forschungsgemeinschaft
(DFG, German Research Foundation) under the project number 281869850. }

\subjclass[2020]{53C20, 53C21, 53C23}

\begin{document}

 \begin{abstract} 
We provide the full classification of equidistant decompositions of 
a two-dimensional Euclidean plane and a two-dimensional sphere.
\end{abstract} 

 \maketitle
 
\section*{Introduction}
A decomposition of a smooth complete Riemannian manifold $M$ into a
collection of pairwise equidistant closed leaves $L_y$, $y\in Y$,
is in  one-to-one correspondence to the natural quotient map $P:M\to Y$, 
where $Y$ is a space of leaves. 
In the most general case, when the leaves of the decomposition may
have different dimension and perhaps of regularity less than $C^{1,1}$,
the map $P:M\to Y$ is called \textit{submetry}.
Submetries were first introduced by Berestovskii \cite{Ber87}
and defined as a map,
that sends balls of a given radius in $M$ surjectively onto balls of the same radius in $Y$.
If, however,  all the leaves $L_y$ are $C^{1,1}$ but still of different dimension, 
the map $P:M\to Y$ is called a  \textit{transnormal} (or Riemannian) submetry.
If $Y$ is a smooth complete Riemannian manifold,
then the map $P$ is a Riemannian submersion \cite{BerGui}. 
The quotient map   $P:M\to M/G$ of the Riemannian manifold $M$
associated to an isometric group action $G$ provides canonical examples of 
 submetries that are not necessarily submersions.

The structure of submetries
was the primary focus of the investigation in \cite{KL20}. 
 One of their central results establishes that the fibers $L_y$ 
are sets of \textit{positive reach} $r>0$. 
By definition, it means that the closest foot-point projection on $L_y$
is uniquely defined within a neighborhood $U(L_y) \subset M$.
In this case, the distance function $d_{L_y}:U(L_y)\to [0, r)$ is a submetry itself.

Even though the sets of positive reach were studied in many works 
\cite{Fed59}, \cite{Lyt05}, \cite{Lyt23}, \cite{RatZaj17},
examples of low regularity are hard to construct explicitly.

The aim of this paper is to present 
 the full classification of equidistant decompositions 
of the two-dimensional Euclidean space and spherical space,
supplementing the known examples.

Before we state our first result, 
let us introduce the following curve in~$\R^2$.
Let $\Pi_a$ and $\Pi_{-a}$ be two disjoint closed half-planes. 
Denote $l_a:=\d \Pi_a$, $l_{-a}:=\d \Pi_{-a}$ 
and $h:=dist(l_a, l_{-a})\geq 0$.
Fix the point  $x_0\in l_a$ and choose
$y_0\in l_{-a}$ such that the distance between $y_0$ and
the orthogonal projection of $x_0$ to $l_{-a}$ is $2a$. 
Consider two families of concentric half-circles 
$$\F_a = \Pi_a\cap\{S(x_0, r_i)\}_{i=0}^\infty  \,\,\, \textnormal{and} \,\,\,
 \F_{-a} = \Pi_{-a}\cap\{S(y_0, r_i)\}_{i=0}^\infty $$ 
 where $S(x, r)$ is a circle of radius $r$ around $x \in \R^2$ and 
 $r_i=(1+2i)\cdot a$ for any $i=0,1,2,\dots$.  
Let $\F_h$ be a family of line segments in
$\R^2\backslash \left(\Pi_a \cup \Pi_{-a}\right)$ 
orthogonal to $l_{\pm a}$ connecting pairwise the
endpoints $l_a \cap \F_a$ and $l_{-a} \cap \F_{-a}$.
From the choice of  $x_0$, $y_0$ and $r_i$, we obtain that the union
$$\sigma_{a,h}:= \F_h \cup \F_a \cup \F_{-a} $$
is a $C^{1,1}$ curve on $\R^2$ for any real number $h\geq 0$ and 
$a > 0$ (see Figure~\ref{fig:R2intro}).

\begin{theorem}\label{thm:1}
    Let $P:\R^2\to Y$ be a submetry with connected fibers
    between a two-dimensional Euclidean space $\R^2$ and
    a one-dimensional Alexandrov space~$Y$. 
    Then up to isometry one of the following holds
   \begin{enumerate}
       \item[i)]  $Y=\R$ and $P:\R^2\to \R$ is an orthogonal projection; 
       \item[ii)] $Y=[0, \infty)$
              and $P$ is a distance function to 
              $L=P^{-1}(0)$ and $L$ is either a point, a closed straight line segment or a half line;
       \item[iii)] $Y=[-a,a]$ and 
                $P$ is a signed distance function to the curve $P^{-1}(0)=\sigma_{a,h}$
                for any $h\geq 0$.
   \end{enumerate}
\end{theorem}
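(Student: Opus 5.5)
A one-dimensional Alexandrov space is, up to isometry, $\R$, $[0,\infty)$, a compact segment $[-a,a]$, or a circle. The plan is to exclude the circle and then treat each remaining case through the structure of the fibers. Two facts drive the whole argument. First, by the result of \cite{KL20} quoted in the introduction, each fiber $L_y$ has positive reach. Second, for $t$ small the fiber $P^{-1}(y\pm t)$ is exactly the level set $\{d_{L_y}=t\}$ on the corresponding side, and horizontal geodesics (those realizing distances between fibers) project to geodesics of $Y$.

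\emph{Step 1: excluding the circle.} Suppose $Y=\Sph^1$. A horizontal geodesic starting orthogonally to a fiber can be continued indefinitely as a straight line in $\R^2$. Its image then winds around $Y$ infinitely often. This forces the line to return to the same connected fiber, which contradicts the fact that a straight segment minimizes the distance between equidistant fibers. One could alternatively use that $\pi_1$-lifting would give a submetry $\R^2\to\R$ whose fibers are not connected.

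\emph{Step 2: local structure of a regular fiber.} For $y$ in the interior of $Y$, the fiber $L=L_y$ is a connected closed set with positive reach on both sides, and locally it separates. I will show it is a $C^{1,1}$ embedded curve, using the unique foot-point projection from both sides. Its curvature, defined almost everywhere, satisfies $|\kappa|\le 1/\rho$, where $\rho$ is the distance from $y$ to the nearest boundary point of $Y$. Equidistance has a further consequence. Moving distance $t$ along the normal produces the parallel curve with curvature $\kappa/(1-t\kappa)$, and this must remain admissible up to distance $\rho$ on the concave side. Hence on each maximal arc the curvature is either $0$ or of modulus $1/(\text{distance to the focal end})$. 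In that case the arc is a circular arc whose center lies on the boundary fiber.

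\emph{Step 3: the three cases.} If $Y=\R$, there is no focal constraint, so every fiber has $\kappa\equiv0$. Each fiber is then a line, all lines are parallel, and $P$ is an orthogonal projection; this is case i).

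If $Y=[0,\infty)$, then $P=d_L$ with $L=P^{-1}(0)$. Regular fibers are distance spheres of $L$, and by Step 2 they consist of segments and circular arcs centered on $L$. Since $L$ is a connected set of positive reach on which $d_L$ has no interior maximum, the points of $L$ that are centers of arcs are "extremal points". I will show $L$ is convex: the level set $\{d_L=t\}$ must be convex for all $t$, since otherwise it would have curvature of the wrong sign. A closed convex set whose boundary has zero curvature away from at most two arc-centers, and which has empty interior (a nonempty interior would make $d_L$ vanish on an open set while $P^{-1}(0)$ must still be a fiber, which is fine, but the boundary would then have curvature contradicting Step 2 at $t\to0$), is a point, a segment, or a half-line. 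This gives ii).

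If $Y=[-a,a]$, the fiber $\sigma=P^{-1}(0)$ is a $C^{1,1}$ curve whose curvature is, on each arc, $0$ or $\pm1/a$. Its two parallel sets at distance $a$ are the boundary fibers $P^{-1}(\pm a)$. These are degenerate sets of positive reach, and each is the union of the focal points, so each is contained in a line. I expect the main obstacle to be this rigidity step: proving that $P^{-1}(\pm a)$ are discrete sets of points lying on two parallel lines $l_{\pm a}$, spaced $2a$ apart, and that straight pieces of $\sigma$ can only be orthogonal segments of a common length $h$. I would first argue that between consecutive focal points the curve $\sigma$ alternates between arcs of radius $a$ about points of $P^{-1}(a)$ and arcs about points of $P^{-1}(-a)$. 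Matching tangents then forces the centers onto lines and the spacing onto $2a$. Concentric families of radii $(1+2i)a$ arise from iterating this construction, and the $C^{1,1}$ gluing yields exactly the curve $\sigma_{a,h}$ of iii).
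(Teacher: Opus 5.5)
Your proposal has a genuine gap in case iii), and the target you set for the rigidity step is false. You aim to prove that $P^{-1}(\pm a)$ are discrete sets of points on two parallel lines spaced $2a$ apart, and that $\sigma$ alternates between radius-$a$ arcs about these points.

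\emph{Why this fails.} The fibers are connected by hypothesis, so $P^{-1}(\pm a)$ cannot be an infinite discrete set. In $\sigma_{a,h}$ itself, $P^{-1}(a)$ is a $C^{1,1}$ curve homeomorphic to $[0,\infty)$. It consists of the point $x_0$, semicircles of radii $4a,8a,\dots$ about $x_0$, semicircles of radii $2a,6a,\dots$ about $y_0$, and segments across the strip joining them. It is not contained in a line, and its only non-regular point is $x_0$. Likewise, the arcs of $\sigma_{a,h}$ are concentric semicircles of radii $(1+2i)a$ about just two centers.

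The configuration you describe (alternating radius-$a$ semicircles with centers $2a$ apart) is a $C^{1,1}$ curve. However, its parallel set at distance $a$ is a chain of radius-$2a$ arcs meeting in cusps. That set does not have positive reach, so it cannot be a fiber. Your closing sentence about concentric families does not follow from the picture you propose.

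Relatedly, in Step~2 the ``admissibility'' of parallel curves gives only the inequality $|\kappa|\le 1/\rho$. It does not give the dichotomy that $\kappa$ is $0$ or the reciprocal of a focal distance, because you have not yet located where focusing can occur.

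\emph{The missing idea} is to determine the topology of the boundary fibers first. Each $L_{\pm a}=P^{-1}(\pm a)$ is a point or a $C^{1,1}$ curve. Moreover $\R^2=D_a\cup D_{-a}$, where $D_{\pm a}=\{d_{L_{\pm a}}\le a\}$, and both sets have the connected fiber $L_0$ as boundary.
\begin{enumerate}
\item[(1)] If $L_a$ were compact (a point, an arc, or a closed curve), both $D_{\pm a}$ would be compact, which is impossible.
\item[(2)] If $L_a$ were homeomorphic to $\R$, it would disconnect $\partial D_a=L_0$.
\end{enumerate}
Hence each $L_{\pm a}$ is a half-line with a single non-manifold point $p_{\pm a}$, and $\R^2\setminus\{p_a,p_{-a}\}$ is a transnormal region.

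Only then does your focal-point intuition become a proof:
\begin{enumerate}
\item[(a)] Horizontal lines can focus only at $p_{\pm a}$.
\item[(b)] The normal cone at $p_{\pm a}$ is a half-plane, and its image $\Pi_{\pm a}$ is swept by lines through $p_{\pm a}$. The zigzag pattern of $P\circ\gamma$ then forces the concentric semicircles of radii $(1+2i)a$.
\item[(c)] Lines avoiding $p_{\pm a}$ sweep the complement of $\Pi_a\cup\Pi_{-a}$, which must be a single strip of parallel segments.
\item[(d)] Connectedness of $L_0$ fixes the offset $2a$ between $y_0$ and the projection of $x_0$.
\end{enumerate}

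Two smaller points. First, your first Step~1 argument is not a contradiction as stated. A horizontal line returning to the same fiber happens for $\R^2\to\R\to\R/\ell\Z$, so the contradiction must use connectedness of the fibers. Your lifting alternative does this, and it matches the paper's simple-connectivity lemma. Second, in case ii) convexity follows cleanly from the fact that $L$ has infinite reach (Federer). You must still exclude $L$ being a full line, which is ruled out because then $\{d_L=t\}$ is disconnected.
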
 
The first two statements are not new. In fact, 
  $i)$ follows from more general result, established in~\cite{BerGui}
  and~$ii)$ follows from Lemma~5.2 and Example~6.6  in~\cite{KL20}.
  The third statement will be proved as Theorem~\ref{thm:R2_segm} below.\\

\begin{figure}[h]
  \begin{minipage}[b]{0.45\textwidth}
        \includegraphics[width=1.\linewidth]{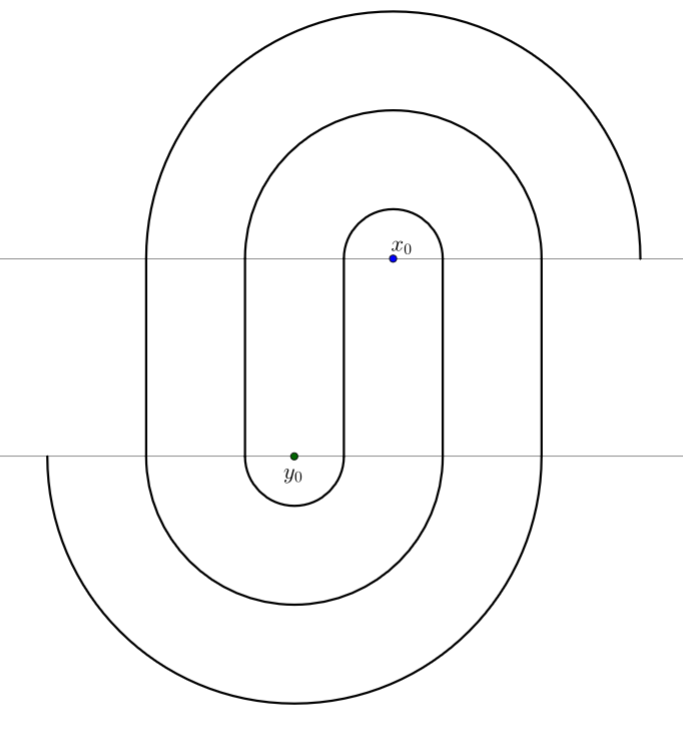}
        \caption{$\sigma_{a,h} \subset \R^2$.}
        \label{fig:R2intro}
  \end{minipage}
    \hfill
  \begin{minipage}[b]{0.45\textwidth}
        \includegraphics[width=1.\linewidth]{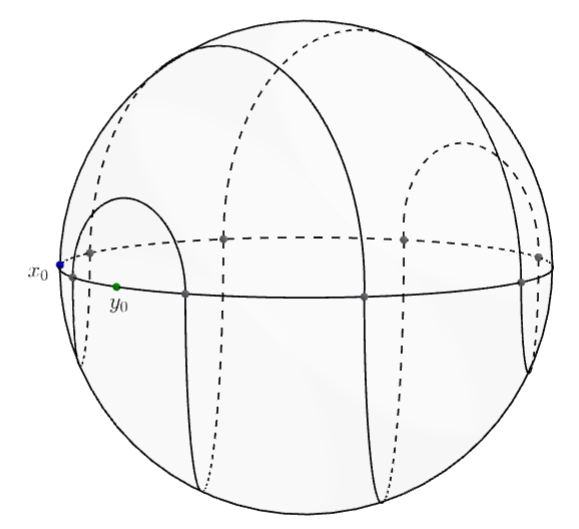}
        \caption{$\sigma_{s} \subset \Sph^2$. }
        \label{fig:S2intro}
  \end{minipage}      
\end{figure}

We can construct a curve similar to $\sigma_{a,h}$ in $\Sph^2$. 
The great circle $S^1\subset \Sph^2$ divides the sphere in two closed
hemispheres $\Sph^2_1$ and $\Sph^2_{-1}$ sharing $S^1$ as a boundary.
Denote $a:=\pi/2k$ for $k>1$ and let $s$ be a number coprime with $k$.
Fix the points $x_0, y_0 \in S_1$ at distance $2a\cdot s$ 
between them. 
Consider two families of concentric half-circles 
$$\F_1 = \Sph^2_1\cap\{S(x_0, r_i)\}_{i=0}^{k-1}  \,\,\, \textnormal{and} \,\,\,
 \F_{-1} = \Sph^2_{-1}\cap\{S(y_0, r_i)\}_{i=0}^{k-1} $$ 
 where $S(x, r)$ is the set of points at distance $r$ from $x$ on $\Sph^2$ and 
 $r_i=(1+2i)\cdot a$ for any $i=0,1,..., k-1$.  
By the choice of $r_i$ and $s$ 
the union 
$$\sigma_{s}=\F_1 \cup \F_{-1} $$
forms a connected $C^{1,1}$ curve on $\Sph^2$
(see Figure~\ref{fig:S2intro}).
\begin{theorem}\label{thm:2}
Let $P:\Sph^2\to Y$ be a submetry with connected leaves between
 a two-dimensional spherical space $\Sph^2$ and a one-dimensional Alexandrov space~$Y$.\\
Then the base space $Y$ is a segment of length $\pi/k$ and 
      \begin{enumerate}
          \item[a)]  If $k=1$, 
                then $P$ is a quotient map of an isometric group action of $S^1$ on $\Sph^2$.
          \item[b)] If $k>1$, then, up to isometry,
                $P$ is the signed distance function to the curve~$P^{-1}(0)=\sigma_{s}$ 
                for some $s$ coprime with $k$.
      \end{enumerate}
\end{theorem}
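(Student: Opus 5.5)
First I reduce the shape of $Y$. A one-dimensional Alexandrov space is a circle, a line, a ray or a segment. Since $\Sph^2$ is compact, $Y$ is compact, so $Y$ is a circle or a segment. A circle is ruled out: $P$ is then a locally trivial submetry over $S^1$, that is, a Riemannian submersion with $C^{1,1}$ fibers. Lifting along the universal cover would give a submetry $\Sph^2\to\R$, because $\Sph^2$ is simply connected and the fibers are connected; this is impossible for compact $\Sph^2$. So $Y=[-a,a]$ after shifting, with singular fibers $L_{\pm a}=P^{-1}(\pm a)$. By \cite{KL20} these are sets of positive reach, and every other fiber $L_t$ is a $C^{1,1}$ curve at distance $a-t$ from $L_a$ and $a+t$ from $L_{-a}$. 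The middle fiber $\sigma=P^{-1}(0)$ is a closed $C^{1,1}$ curve (connected by assumption) with two-sided normal injectivity radius $a$. Moreover, every unit-speed geodesic orthogonal to $\sigma$ is a horizontal geodesic; along it $P$ moves as a unit-speed zigzag on $[-a,a]$, with period $4a$ in arclength. Since geodesics on $\Sph^2$ close up after length $2\pi$, a great circle through an orthogonal direction of $\sigma$ gives $2\pi=4a\cdot m$, so $a=\pi/2k$ and the length of $Y$ is $\pi/k$.

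Next I analyse the fibers $L_{\pm a}$. Every point of $L_a$ is the endpoint of normal geodesics from $\sigma$ of length exactly $a$. Near it, the level curves $L_t$ are parallel curves of $\sigma$ whose curvature blows up as $t\to a$. I show that $L_a$ is a finite union of points, possibly together with arcs of curvature $\cot$-type, and I classify the possibilities. The key local claim is that each connected component of $L_a$ is either a point or a geodesic arc. Since points at distance $<a$ from it have unique foot points, a component of positive reach that is one-dimensional, with parallel curves at distance up to $2a$ still $C^{1,1}$, must be a geodesic or a small circle.

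I then assemble the global picture, organised by where $\sigma$ is totally geodesic. If $k=1$, then $a=\pi/2$, and $L_{\pm a}$ are sets whose $\pi/2$-neighborhoods cover $\Sph^2$ with reach constraints. This forces $L_{\pm a}$ to be antipodal points, so $\sigma$ is the equator and $P$ is the latitude map, the $S^1$-quotient; this gives a). If $k>1$, I take a great circle arc $\gamma\subset\sigma$, or a point where the curvature of $\sigma$ changes. The parallel structure forces $\sigma$ to consist of arcs of circles $S(x,(2i+1)a)$ centred at isolated points $x$ of $L_{\pm a}$, since these are exactly the curves whose parallel families degenerate at distance $a$. Adjacent arcs must be joined $C^{1,1}$ along a common great circle $S^1$, with centres on $S^1$ at distance a multiple of $2a$. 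Closedness and connectedness of $\sigma$ then force the centres $x_0,y_0$ to sit at distance $2as$ with $\gcd(s,k)=1$. The necessity of coprimality is that otherwise the union splits into several components, contradicting the assumption of connected fibers. This recovers $\sigma_s$, and the converse (that the signed distance to $\sigma_s$ is a submetry) is a direct check.

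The main obstacle is the rigidity step: proving that the centres of curvature of $\sigma$ are isolated points, so that $\sigma$ is piecewise concentric circular. I would argue via the focal map. The map $x\mapsto\exp(a\nu(x))$ from $\sigma$ to $L_a$ is $1$-Lipschitz and has a Lipschitz inverse on the part where it is injective. If its image contained an arc, then $\sigma$ would locally be a parallel of a curve at distance $a$, and the other side would produce a focal point at distance less than $2a$, contradicting that $L_{-a}$ lies at distance exactly $2a$. So the focal map is locally constant off a closed set, and piecewise constancy gives the circle arcs.
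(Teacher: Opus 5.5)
There is a genuine gap: the structural part of your argument rests on claims that fail for the very curves $\sigma_s$ being classified. First, a unit-speed geodesic orthogonal to $\sigma$ need not stay horizontal, so $P$ need not run a $4a$-periodic zigzag along it. That description, via the quasigeodesic property, is only available while the geodesic stays in the transnormal region $\Sph^2\setminus B$, where $B$ is the set of endpoints of the boundary fibres $L_{\pm a}$. For $\sigma_s$ with $k>1$ every normal geodesic meets $B$. Normals at points of $\sigma_s$ in the open hemisphere $\Sph^2_1$ are great circles through $x_0$, those in $\Sph^2_{-1}$ pass through $y_0$, and those at $\sigma_s\cap S^1$ coincide with $S^1$. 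A great circle through $x_0$ transverse to $S^1$ has its other half in $\Sph^2_{-1}$, where the fibres are circles about $y_0\neq\pm x_0$. It is not orthogonal to them there, and $P$ does not have unit speed along it. So your derivation of $2\pi=4am$, and the case split by $k$ built on it, is unjustified.

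Second, your picture of $L_{\pm a}$ is wrong. These fibres are connected by hypothesis. They cannot be small circles, since then $L_0=\partial D_{\pm}$ would be disconnected, where $D_{\pm}$ is the closed $a$-neighbourhood of $L_{\pm a}$. And for $k\ge 3$ the boundary fibres of $\sigma_s$ are single $C^{1,1}$ arcs containing the semicircle $S(x_0,2a)\cap\Sph^2_1$ of radius $\pi/k<\pi/2$, which is not a geodesic arc.

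The rigidity step you single out fails too. The normal map $\sigma\to L_a$ is not locally constant off a small set, and its image can contain arcs without creating any focal point before distance $2a$. For $k\ge 3$ the arc $S(x_0,3a)\cap\Sph^2_1$ of $\sigma_s$ is carried diffeomorphically onto $S(x_0,2a)\cap\Sph^2_1$ on one side and onto $S(x_0,4a)\cap\Sph^2_1$ on the other. Your conclusion would make $\sigma$ a union of arcs of circles of radius $a$, whereas for $k\ge 3$ the arc $S(x_0,3a)\cap\Sph^2_1$ is not of this form. The map is not $1$-Lipschitz either: it stretches $S(x_0,a)\cap\Sph^2_1$ onto $S(x_0,2a)\cap\Sph^2_1$ by the factor $2\cos a>1$.

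The missing idea is to work at an endpoint $p\in B$, as the paper does.
\begin{itemize}
\item The normal cone at $p$ is a half-plane, whose exponential image is a closed hemisphere, and $\sigma$ meets it in the semicircle of radius $a$ about $p$.
\item The horizontal geodesics from this semicircle are the meridians through $p$. They stay horizontal in the open hemisphere, so all fibres there are concentric semicircles about $p$.
\item The meridians reconverge at $-p$, which therefore lies in $B$; this gives $a=\pi/2k$.
\item One then shows that the hemispheres attached to $p_+$ and $p_-$ have the same boundary great circle, and connectedness of the glued curve gives $\gcd(s,k)=1$.
\end{itemize}
Your exclusion of $Y=S^1$ by lifting to $\R$ is fine and can replace the paper's simple-connectivity lemma.
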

When $k=1$, the submetry $P:\Sph^2 \to Y^1$ is transnormal.
In this case, the result coincides with the analogous result for 
singular Riemannian foliations on $\Sph^2$ due to Radeschi \cite{Rad12}.

By \cite{Lyt01}, any submetry $P:X \to Y$ between Alexandrov spaces admits 
a canonical factorization $P=P_1\circ P_0$, where 
the submetry $P_0: X\to Y_0$ has connected fibers
and the submetry $P_1:Y_0 \to Y$ has discrete fibers. 
All one-dimensional Alexandrov spaces
are the quotients of the real line  $Y=\R/G$ by a discrete group of isometries $G$. 
From the result of Lange \cite[Theorem 1.2]{Lange20}
follows that 
all submetries $P_1:Y_0 \to Y$ with discrete fibers are coverings. 
Namely, $Y_0 =\R/G_0$, where $G_0$ is a subgroup of $G$ 
(see Subsection~\ref{subsec:discrete}). 
These results in combination with our Theorems \ref{thm:1} and \ref{thm:2}
gives us the full classification of the equidistant decompositions 
of the two-dimensional Euclidean space and spherical space.\\
\par


This paper is organized as follows:
in Section~\ref{sec:prelim} we give all necessary definitions and preliminaries, 
in Section~\ref{sec:prep} we do further preparations.
In Section~\ref{sec:sub_R2}  we first summarize and present
all submetries from $\R^n$ to $\R$ and $[0, \infty)$ in Theorem~\ref{thm:Rn_sub},
 thereby  proving $i)$ and $ii)$ of Theorem~\ref{thm:1}.
 Theorem~\ref{thm:R2_segm} corresponds to the statement~$iii)$ of Theorem~\ref{thm:1}.
In the last Section \ref{sec:sub_S2}, we prove Theorem~\ref{thm:2}.

\subsection*{Acknowledgments.} The author is grateful to Alexander Lytchak  
for suggesting this problem and for his continuous support throughout the project.\\

\section{Preliminaries}\label{sec:prelim}

\subsection{Definition and basic properties}\label{subsec:def}
A submetry $P:X \rightarrow Y$ is a map between metric spaces $X$ and $Y$
which sends balls $B_r(x)$ of radius $r$ around any point $x\in X$  
surjectively onto the closed $r$-ball around $P(x)$, i.e.
$ P(B_r(x))=B_r(P(x))$.
The space $X$ is called total space, $Y$ is called base space.

Let $d_A:X\to [0, \infty)$ be a distance function to a subset $A$ in $X$. 
The following property follows from the definition of  submetry. 
\begin{proposition}[Lemma 2.1., \cite{KL20}]\label{basic}
   A map $P: X \rightarrow Y$  between metric spaces is a submetry if and only if $P$
   is surjective and, for any $y \in Y$, we have $d_y \circ P = d_{P^{-1}(y)}$. 
\end{proposition}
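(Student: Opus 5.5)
We prove the two implications separately, working directly from the definition that $P(B_r(x))=B_r(P(x))$ for every $x\in X$ and $r\ge 0$. Throughout, $d_{P^{-1}(y)}(x)=\inf\{d(x,z): P(z)=y\}$, and balls are closed.

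\emph{Necessity.} Let $P$ be a submetry and fix $x\in X$, $y\in Y$, and set $r=d(P(x),y)$. Since $y\in B_r(P(x))=P(B_r(x))$, there is $z$ with $P(z)=y$ and $d(x,z)\le r$. Hence
\[
d_{P^{-1}(y)}(x)\le d(P(x),y).
\]
For the reverse inequality, take any $z\in P^{-1}(y)$ and put $s=d(x,z)$. Then $y=P(z)\in P(B_s(x))=B_s(P(x))$, so $d(P(x),y)\le d(x,z)$. Taking the infimum over such $z$ gives $d_y\circ P=d_{P^{-1}(y)}$. Surjectivity is immediate, since $P(B_0(x))=\{P(x)\}$ and every $y$ lies in some ball $B_r(P(x))=P(B_r(x))$.

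\emph{Sufficiency.} Assume $P$ is surjective and $d_y\circ P=d_{P^{-1}(y)}$ for every $y$. Note that surjectivity makes every fiber nonempty, so each $d_{P^{-1}(y)}$ is finite.

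First, $P$ is $1$-Lipschitz. For $x,x'\in X$ and $y=P(x')$ we have
\[
d(P(x),P(x'))=d_{P^{-1}(y)}(x)\le d(x,x').
\]
It follows that $P(B_r(x))\subset B_r(P(x))$.

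For the converse inclusion, take $y$ with $d(P(x),y)\le r$. Then $d_{P^{-1}(y)}(x)\le r$, but the infimum need not be attained, so this only yields points of $P^{-1}(y)$ within distance $r+\varepsilon$ of $x$. This attainment issue is the one genuine obstacle. We handle it by assuming, as holds in the setting of this paper (proper spaces such as Riemannian manifolds and Alexandrov spaces), that fibers are closed and closed balls are compact. Fibers are closed because $P$ is continuous, being $1$-Lipschitz. A minimizing sequence in $P^{-1}(y)$ for $d(x,\cdot)$ then stays in a compact ball, and a limit point $z$ lies in the closed fiber with $d(x,z)\le r$. Thus $y\in P(B_r(x))$.

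In a general metric space the statement should be read with this properness assumption, or with open balls, or with \cite{KL20}'s convention. With open balls no attainment is needed: $d_{P^{-1}(y)}(x)<r$ directly gives some $z\in P^{-1}(y)$ with $d(x,z)<r$.
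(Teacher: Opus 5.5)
Your proof is correct. The paper gives no argument of its own here; it only quotes \cite[Lemma 2.1]{KL20}, so there is no competing route to compare with. Your direct verification is the standard one: the distance identity gives $1$-Lipschitz and hence $P(B_r(x))\subset B_r(P(x))$, and realizing $d_{P^{-1}(y)}(x)$ by a point of the fiber gives the reverse inclusion.

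Your caveat about attainment is a genuine issue. With closed balls, which is how the paper phrases the definition, the ``if'' direction is false for general metric spaces. Take $X=\{a_n\}_{n\ge1}\cup\{b_m\}_{m\ge1}$ with $d(a_n,b_m)=1+1/\max(n,m)$ and all other distinct pairs at distance $1$. Every nonzero distance lies in $[1,2]$, so the triangle inequality is automatic. Let $Y=\{0,1\}$, $P(a_n)=0$ and $P(b_m)=1$. Then $P$ is surjective and $d_y\circ P=d_{P^{-1}(y)}$, but $P(\bar B_1(a_1))=\{0\}\neq \bar B_1(0)$. Your necessity argument in fact shows that closed-ball submetries are characterized by the distance identity \emph{plus} attainment of the infimum. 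Attainment is automatic for proper $X$, which is the only case the paper uses, so nothing downstream is affected.

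One small correction for the open-ball reading you mention. There your necessity argument does not apply verbatim, because a point $y$ with $d(P(x),y)=r$ is not in the open ball $B_r(P(x))$. Use $B_{r+\varepsilon}$ in both inequalities and let $\varepsilon\to 0$.
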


In particular, it means that the submetry $P:X \rightarrow Y$ 
induces an equidistant decomposition of the space $X$.
Indeed, for any two points $y_1, y_2 \in Y$ 
the distance between fibers
$P^{-1}(y_1)$ and $P^{-1}(y_2)$ is constantly equal to $d(y_1, y_2)$ on $Y$.
The opposite is also true.
In fact, if a metric space $X$ admits a decomposition 
into a collection of pairwise equidistant leaves $L_y$, $y\in Y$,
then the set of leaves $Y$ with induced distance
becomes a metric space. 
Then the canonical projection $P:X\to Y$ is a submetry.

The submetry $P:X\to Y$ induces some properties of the total space $X$ onto $Y$ 
(see \cite[Proposition 1]{Ber87}). 
In particular, if $X$ is a smooth complete Riemannian manifold $M$,
then $Y$ is a complete and proper metric space.
We may also replace $Y$ with its given metric by 
 $P(M)$ with an induced length metric. 
Due to \cite[Corollary 2.10]{KL20} $P:M\to Y$ remains a submetry.
Thus, without loss of generality, we assume that $P:M\to Y$
is surjective and $Y$ is a length space.

The restriction of a submetry $P:X \rightarrow Y$ to 
an open subset $A \subset X$ might not be a submetry, but it is a local submetry.
The definition of local submetry was given in \cite[Definition 2.7]{KL20}.
However, for us there is no need to localize the definition, since
 by \cite[Corollary 2.9]{KL20} a local submetry $P:M\to Y$ 
 from a complete Riemannian manifold $M$ is a submetry.

Additionally, 
if $P:M\to Y$ is a submetry from the Riemannian manifold~$M$,
then $Y$ is also a locally contractible. 
In fact, by \cite[Corollary 1.5]{KL20}, 
the tangent space $T_yY$ has a canonical decomposition 
$T_yY=\R^l\times T_y^0Y$, for some $l\geq 0$,
where $T_y^0Y$ is the Euclidean cone over 
an Alexandrov space of diameter at most $\pi/2$.
The exponential map $exp_y$ defines a homeomorphism from the 
$r$-ball around the origin in $T_yY$ to the $r$-ball in $Y$. 
Since the ball  around the origin in  $T_yY$ is contractible, 
so is the ball in $Y$.

\subsection{Horizontal lift}\label{subsec:lift}

Given a submetry $P:M\to Y$,
a rectifiable curve $\gamma:[a,b]\to M$ is called \textit{horizontal}
if the length of $\gamma$ in $M$ equals the length of $P\circ \gamma$ in $Y$. 

Recall that $M$ is a smooth Riemannian manifold 
and $Y$ is a proper length space. 
Then, from \cite[Lemma 2.8]{KL20}
it follows that for any rectifiable curve $\sigma: [a,b] \to Y$ 
starting at $\sigma(a) = y\in Y$
there exists a horizontal curve $\gamma:[a,b]\to M$ 
such that  $\gamma(a)\in P^{-1}(y)$ and $\sigma = P \circ \gamma$.
The curve $\gamma$ is then called a \textit{horizontal lift} of $\sigma$. 
For a locally Lipchitz curves  it is equivalent to 
the property that the velocities $|\gamma'(t)|$ and $|\sigma'(t)|$ 
coincide for all $t\in [a,b]$. 

Following \cite{KL20}, we call a curve $\gamma:[0,s] \to M$ 
a \textit{$P$-minimal geodesic} if  
it is parametrized by arclength and
$$ dist\left( P\left(\gamma(0) \right), P \left(\gamma(s) \right) \right) = s.$$
The $P$-minimal geodesic $\gamma:[0,s]\to M$, if it exists,
 realizes the distance between the fibers through its endpoints. 
If $\gamma$ is a $P$-minimal geodesic, 
then $\gamma$ is a geodesic in $M$ and
$P\circ \gamma$  is a geodesic in $Y$. 
Moreover, any horizontal lift
of a geodesic $\sigma:[a,b]\to Y$  is a $P$-minimal geodesic. 
Using this statement  in combination with Proposition~\ref{basic}, we get the following.
\begin{lemma}\label{lem:dist}
    Let $M$ be a complete Riemannian manifold. 
    \begin{enumerate}
        \item If $P:M\to [a,\infty)$ is a submetry onto a half line,
    then $P$ is a distance function to the set $P^{-1}(a)$.
        \item  If $P:M\to [a,b]$ is a submetry onto a segment, 
    then $P$ is a distance function to the set $P^{-1}(a)$ (or to $P{-1}(b)$),
    or, equivalently, $P$ is a signed distance to the middle fiber $P^{-1}(\frac{b-a}{2})$.
    \end{enumerate}

\end{lemma}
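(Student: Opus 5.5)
The plan is to combine Proposition~\ref{basic} with the identification of the base space. In case (1) let $L=P^{-1}(a)$. The base $Y=[a,\infty)$ carries the standard metric, so for every $t\ge a$ we have $d_a(t)=t-a$. Applying Proposition~\ref{basic} with $y=a$ gives, for any $x\in M$,
\[
P(x)-a = d_a(P(x)) = d_{L}(x).
\]
Hence $P = a + d_L$, which up to the translation of the base is exactly the distance function to $P^{-1}(a)$. This is essentially the whole argument for (1). The only point to check is that $L$ is nonempty, which holds because submetries are surjective.

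For case (2) we argue the same way at the endpoint $a$ of $Y=[a,b]$. Since $d_a(t)=t-a$ on $[a,b]$, Proposition~\ref{basic} yields $P-a=d_{P^{-1}(a)}$. Symmetrically, using the endpoint $b$ and $d_b(t)=b-t$, we get $b-P=d_{P^{-1}(b)}$.

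For the reformulation via the middle fiber, let $m$ denote the midpoint of $[a,b]$ and $L_m=P^{-1}(m)$. (The statement writes $\frac{b-a}{2}$; after normalizing $Y=[-c,c]$ the midpoint is $0$, which is presumably what is meant.) Proposition~\ref{basic} at $y=m$ gives $d_{L_m}=|P-m|$. We then define the sign as $+$ on $P^{-1}((m,b])$ and $-$ on $P^{-1}([a,m))$. With this choice the signed distance to $L_m$ equals $P-m$, and conversely $P$ is recovered from that signed distance.

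One observation connects the two descriptions: the sets $\{P>m\}$ and $\{P<m\}$ are open, and they are separated by $L_m$. So the sign is the "side of $L_m$", which is what makes the signed distance well defined as a function determined by $L_m$.

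The main (mild) obstacle is making "signed distance" meaningful intrinsically, i.e.\ not just defined through $P$ itself. For this I would note that $M\setminus L_m$ splits into the two open sets above, each a union of components of $M\setminus L_m$. The horizontal lifts from \S\ref{subsec:lift} of the segments $[m,b]$ and $[a,m]$ are $P$-minimal geodesics. They show that every point of $\{P>m\}$ reaches $L_m$ along a geodesic of length $P-m$ that stays on one side, which confirms the identification of each side. The rest is routine.
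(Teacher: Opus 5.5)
Your argument is correct and takes essentially the paper's approach: the paper gets the lemma in one line by combining the characterization $d_y\circ P=d_{P^{-1}(y)}$ of submetries with the fact that horizontal lifts of geodesics in $Y$ are $P$-minimal geodesics. Your reading of the statement is also the intended one, namely $P-a=d_{P^{-1}(a)}$ and $b-P=d_{P^{-1}(b)}$, with the middle fiber at the midpoint $\frac{a+b}{2}$ rather than the stated $\frac{b-a}{2}$.
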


However, if $\gamma$ is a horizontal geodesic in $M$, 
then its image $P\circ\gamma $ 
is not necessarily a geodesic in $Y$, but a 
quasigeodesic (see \cite[Proposition 4]{GW11}). 
Moreover, they have a special form
\begin{proposition}[Corollary 7.4, \cite{KL20}]\label{quasigeod}
    Let $P:X \to Y$ be a local submetry between Alexandrov regions.
    Let $\gamma:[0,l] \to X$ be a geodesic. 
    Then $P\circ\gamma$ is a quasigeodesic and 
    there is a partition $0\leq t_1 < t_2 < \dots, < t_N\leq l$ 
    such that each restriction $\left(P\circ\gamma\right)|_{[t_i, t_{i+1}]}$
    is a locally minimizing geodesic. 
\end{proposition}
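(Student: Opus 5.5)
The plan is to prove the two assertions separately. First, that $P\circ\gamma$ is a quasigeodesic, by comparison for distance functions. Then the finite partition, by a local analysis near each parameter combined with compactness of $[0,l]$. Throughout I use Proposition~\ref{basic} in its local form: near any point, $d_y\circ P = d_{P^{-1}(y)}$ for $y$ close to $P(x)$.

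\emph{Quasigeodesic property.} Let $\kappa$ be a lower curvature bound for $X$, hence also for $Y$; submetries do not decrease lower curvature bounds, which is part of the assumption that $Y$ is an Alexandrov region. Recall Petrunin's characterization: a curve $\sigma$ with constant speed is a quasigeodesic iff for every $p$ the function $\rho_\kappa\circ d_p\circ\sigma$ satisfies $(\rho_\kappa\circ d_p\circ\sigma)''\le 1-\kappa\,\rho_\kappa\circ d_p\circ\sigma$ in the barrier sense. Here $\rho_\kappa$ is the usual modified distance function, and the inequality is rescaled by the square of the speed.
\begin{enumerate}
\item For a point $p=y\in Y$ we have $d_y\circ(P\circ\gamma)=d_L\circ\gamma$ with $L=P^{-1}(y)$.
\item Since $d_L=\inf_{q\in L}d_q$, each $\rho_\kappa\circ d_q\circ\gamma$ satisfies the differential inequality along the geodesic $\gamma$ by Toponogov comparison.
\item The infimum of such functions again satisfies it in the barrier sense: at a minimizing $q$ one uses the support function from $d_q$.
\end{enumerate}
It remains to check that $P\circ\gamma$ has constant speed. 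For this I would use that the horizontal part of $\gamma'$, that is, the norm of the differential $D_{\gamma(t)}P(\gamma'(t))$ in the tangent cone, is constant along a geodesic. It is non-increasing by first variation, since the vertical angle can only grow. It is non-decreasing by the same argument applied to the reversed geodesic. As an alternative, one could cite \cite[Proposition 4]{GW11}.

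\emph{Finite partition.} This is where I expect the main difficulty. For each $t_0\in[0,l]$ I want $\varepsilon>0$ such that $P\circ\gamma$ restricted to $[t_0-\varepsilon,t_0]$ and to $[t_0,t_0+\varepsilon]$ are minimizing geodesics. Compactness of $[0,l]$ then gives finitely many $t_i$.

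To find $\varepsilon$, put $L_0=P^{-1}(P(\gamma(t_0)))$, which has positive reach by \cite{KL20}. Then:
\begin{enumerate}
\item Consider the one-sided direction $v=\gamma^+(t_0)$ and its image $D P(v)\in T_{P(\gamma(t_0))}Y$.
\item If $DP(v)=0$, the constant-speed statement forces $P\circ\gamma$ to be constant on $[t_0,t_0+\varepsilon]$.
\item Otherwise, take the geodesic $\sigma$ in $Y$ issuing in direction $DP(v)$; it exists for short time because $\exp_y$ is a local homeomorphism of the tangent cone, as recalled in Subsection~\ref{subsec:def}.
\item Lift $\sigma$ horizontally from $\gamma(t_0)$ as in Subsection~\ref{subsec:lift}. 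Such lifts are $P$-minimal geodesics leaving $L_0$ in a direction with horizontal part $DP(v)$.
\item Using the uniqueness of foot points in the reach neighborhood of the fiber through $\gamma(t_0+\varepsilon)$, compare $d_{P^{-1}(\sigma(s))}\circ\gamma$ with the quasigeodesic inequality. Here I use that the lengths agree and that a quasigeodesic starting in a direction agrees with the geodesic in that direction as long as distance from the start point is realized: $d_{P(\gamma(t_0))}(P\gamma(t))=c\,(t-t_0)$ for small $t-t_0$. This equality should follow from the rigidity case of the concavity inequality together with the positive reach of $L_0$.
\end{enumerate}

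Once that equality holds, $P\circ\gamma$ on $[t_0,t_0+\varepsilon]$ is a curve of speed $c$ realizing distance from its starting point, hence a minimizing geodesic. The symmetric argument handles the left side.

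The hard step is this local rigidity, namely that $d_{L_0}\circ\gamma$ is exactly linear near $t_0^+$. If the direct argument fails, I would fall back on the structure of fibers as sets of positive reach: $d_{L_0}$ is $C^{1,1}$ off $L_0$ within the reach. Along $\gamma$ near $t_0$, the gradient of $d_{L_0}$ makes a fixed angle with $\gamma'$, by the constant horizontal speed established above. This forces $(d_{L_0}\circ\gamma)'$ to be constant on the short interval.
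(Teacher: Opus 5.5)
The paper does not prove this proposition; it quotes \cite[Corollary~7.4]{KL20}, and the sentence right before it shows that $\gamma$ is meant to be \emph{horizontal}. Without that hypothesis the statement is false, and so is the lemma you use for constant speed. Take the submetry $P=|\cdot|\colon\R^2\to[0,\infty)$ and $\gamma(t)=(t,1)$. Then $|D_{\gamma(t)}P(\gamma'(t))|=|t|/\sqrt{1+t^2}$, which is not constant. Moreover $P\circ\gamma(t)=\sqrt{1+t^2}$ turns back at the interior point $1$, so no reparametrization of it is a quasigeodesic. Your ``vertical angle can only grow'' argument therefore has no basis. Your rescaling remark would not rescue the non-horizontal case either: Toponogov along the unit-speed $\gamma$ gives $f''\le 1-\kappa f$, not the inequality with the factor $c^2$ required when $c<1$. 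Once $\gamma$ is assumed horizontal, unit speed of $P\circ\gamma$ is automatic, since $P$ is $1$-Lipschitz and the lengths agree. Your Toponogov-plus-infimum argument for the quasigeodesic part is then correct.

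The real gap is the local step behind the partition. Your fifth step is not an argument. The horizontal lift of $\sigma$ through $\gamma(t_0)$ need not be $\gamma$. The principle that a quasigeodesic follows the geodesic in its initial direction ``as long as distance is realized'' presupposes the conclusion. The ``rigidity case'' is never specified. The fallback does not work either: $|DP(\gamma')|$ measures $\gamma'$ against the fibers through $\gamma(t)$, not against $\nabla d_{L_0}$, so no fixed angle follows from it. For horizontal $\gamma$ that angle would have to be $0$, which merely restates that $\gamma$ is a normal ray of $L_0$ --- the very thing to be proved.

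The missing idea has two parts. First, $\pm\gamma'(t_0)$ lie in the normal cone $N_{\gamma(t_0)}L_0$. Indeed, the quasigeodesic $P\circ\gamma$ has unit one-sided derivatives, so $d_{L_0}(\gamma(t_0\pm s))=s+o(s)$. On the other hand, for a set of positive reach $d_{L_0}(\exp_x(sv))=s\,\mathrm{dist}(v,T_xL_0)+o(s)$, and for a unit vector $v$ this distance equals $1$ exactly when $v$ lies in the polar cone. Second, a geodesic leaving a set of positive reach in a unit normal direction realizes the distance to it for times below the local reach; this is \eqref{eq:hor_geod} and \cite[Proposition~7.3]{KL20}. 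Hence $d(P\gamma(t_0),P\gamma(t_0\pm s))=d_{L_0}(\gamma(t_0\pm s))=s$ for $0\le s\le\varepsilon$. So $P\circ\gamma$ is minimizing on both one-sided intervals, and your compactness argument then gives the partition.

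Note that this uses positive reach of the fibers (Proposition~\ref{fibers_pr}), which is known only when the total space is a Riemannian manifold. That is the case the paper needs, but your argument as set up does not cover general Alexandrov regions.
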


\subsection{Sets of positive reach}\label{subsec:pos_reach}

 The following key result was proved in the work by Kapovitch and Lytchak.
\begin{proposition}[Theorem 1.1 \cite{KL20}]\label{fibers_pr}
Let $M$ be a Riemannian manifold and $P:M \to Y$ be a submetry.
Then any fiber $L=P^{-1}(y)$, $y\in Y$ of the submetry $P$
 is a set of positive reach in $M$.
\end{proposition}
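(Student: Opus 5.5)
The plan is to use the standard characterisation of positive reach through extendability of minimal segments. A closed set $L\subset M$ has positive reach if there is $r>0$ with the following property: every $x$ with $0<d_L(x)<r$ lies in the \emph{interior} of a minimizing segment from $L$. That is, some geodesic $\gamma$ starting on $L$, passing through $x$ and continuing a little beyond $x$, still satisfies $d_L(\gamma(s))=s$.

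Given this property, uniqueness of the foot point follows by a first-variation argument. Let $p_1$ be the foot point along $\gamma$ and $x'=\gamma(d_L(x)+\delta)$. If $p_2\neq p_1$ were another foot point of $x$, the broken curve $p_2\to x\to x'$ would have length $d_L(x')$. It would therefore be a minimizing geodesic, so it is smooth at $x$ and coincides with $\gamma$, which forces $p_2=p_1$. A uniform lower bound on $r$ near compact pieces of $L$ then comes from a compactness argument, using that $M$ is locally compact and $Y$ is proper.

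So the work is to produce the extension, and here the submetry is the essential tool. Write $y=P(L)$; by Proposition~\ref{basic}, $d_L=d_y\circ P$. Step one: show there is $\varepsilon>0$ such that every point $z\in Y$ with $0<d(y,z)<\varepsilon$ lies in the interior of a geodesic in $Y$ starting at $y$. That is, there is $z'$ with $d(y,z')=d(y,z)+d(z,z')$ and $d(z,z')>0$. Step two: lift. The horizontal lift (Subsection~\ref{subsec:lift}) of the geodesic $y\to z\to z'$, started at the foot point $p$ of a given $x\in P^{-1}(z)$, is a $P$-minimal geodesic. Using the ball-surjectivity $P(B_s(x))=B_s(P(x))$ we can arrange that it passes through $x$ itself. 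Concretely, take a minimizing segment from $p$ to $x$, which is $P$-minimal, and then lift the remaining piece $z\to z'$ starting at $x$. The concatenation has length $d(y,z')=d_L(\text{endpoint})$, so it is minimizing and hence an unbroken geodesic through $x$. This gives exactly the extendability required above.

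I expect step one, extendability of geodesics from $y$ in the base, to be the main obstacle, since a priori $Y$ is only a proper length space. My first attempt would transport local geometry of $M$ down to $Y$. For $x$ near $L$, pick a small strongly convex ball $B$ around the foot point $p$. Points of $B$ lying on the far side of $x$, for example $\exp_p$ of $(1+\eta)$ times the initial vector of $px$, have $d_L$ at least $d_L(x)+\eta d_L(x)-C\eta^2 d_L(x)^2$. This uses the lower curvature bound and the fact that $d_L$ is semiconcave away from $L$, with constants uniform near $p$. The goal is then a point $z'$ whose distance from $y$ grows almost additively, and compactness would turn almost-additivity into an actual geodesic extension. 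If this fails, the fallback is to use the quasigeodesic structure from Proposition~\ref{quasigeod}. Extend the geodesic $px$ slightly beyond $x$ in $M$. Its image $P\circ\gamma$ is piecewise a local geodesic, and I would try to show that $d_y$ cannot have a local maximum along it at $P(x)$ for small $d_L(x)$. Then the image segment just after $P(x)$ continues to increase $d_y$ at unit speed and provides $z'$.
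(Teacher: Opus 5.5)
Your proposal has a genuine gap. The paper does not prove this statement itself; it imports it from \cite{KL20}, so I judge your argument on its own terms.

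\textbf{What works, and why it is only a reformulation.} If every $x$ with $0<d_L(x)<\varepsilon$ is an interior point of a segment realizing $d_L$, your first-variation argument does give unique foot points. The lifting in step two (a $P$-minimal segment from the foot point to $x$, followed by a $P$-minimal segment over $z\to z'$ supplied by ball surjectivity) is also correct. But the converse holds too: if $L$ has reach $\geq\varepsilon$, pushing the prolonged segments down by the $1$-Lipschitz map $P$ gives exactly your step one. So the whole content of the statement sits in step one, and it is never proved.

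\textbf{Step one needs input you do not supply.} It cannot follow from the Alexandrov geometry of $Y$ alone. In a closed round disk with $y$ on the boundary circle, no boundary point $z\neq y$, however close to $y$, lies in the interior of a geodesic starting at $y$. Any proof must therefore use, in an essential way, that $Y$ is the base of a submetry from a manifold, and the proposal does not say how.

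\textbf{Your first attack.} Semiconcavity of $d_L$ away from $L$ and a lower curvature bound give only \emph{upper} bounds for $d_L$ along the prolonged geodesic. The lower bound $d_L\geq d_L(x)+\eta\,d_L(x)-C\eta^2d_L(x)^2$ is a semiconvexity-type estimate, exactly the kind that characterizes positive reach. It fails for general closed sets: for $L=\{p_1,p_2\}\subset\R^2$ and $x$ the midpoint, prolonging $p_1x$ decreases $d_L$ to first order. Even if you had it, almost-additivity with an $O(\eta^2)$ defect at one point does not give, by compactness, some $z'\neq z$ with $d(y,z')=d(y,z)+d(z,z')$ exactly. As $\eta\to0$ the almost-extensions simply collapse to $z$.

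\textbf{Your fallback.} Proposition~\ref{quasigeod} only says that $P\circ\gamma$ is piecewise a local geodesic. It allows a break exactly at $t=d_L(x)$ where $P\circ\gamma$ turns back toward $y$. This is what the images of horizontal geodesics do at the endpoints $\pm1$ of $Y$ in the proof of Theorem~\ref{thm:R2_segm}. Excluding such a turn when $d_L(x)$ is small is the theorem itself, and you give no mechanism for it.

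You would also need to check that the results you import from later in \cite{KL20} do not already rest on positive reach of fibers. The local horizontality of geodesics quoted in Subsection~\ref{subsec:tr_reg} (Proposition~7.3 there) is essentially the extension property you are trying to establish.
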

We recall that a subset $L$ of a Riemannian manifold $M$ 
 has \textit{positive reach}
 if the closest point projection on $L$ is 
 uniquely defined in a neighborhood $U(L)$ in $M$. 
 Moreover, the following is also true
\begin{proposition}[Proposition 6.3 \cite{KL20}]\label{pr_subm}
Let $L\subset M$ be a closed subset of positive reach,
nowhere dense in a Riemannian manifold $M$.
Then $d_L:U(L) \to [0, \infty)$ is a local submetry with $L=P^{-1}(0)$.
\end{proposition}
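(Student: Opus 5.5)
The plan is to check directly the local version of the criterion in Proposition~\ref{basic}. Fix $x\in U(L)$ and choose $r>0$ small enough that a neighbourhood of $B_{3r}(x)$ stays inside the region where the foot-point projection $\pi:U(L)\to L$ is unique. It then suffices to show that for every $t\ge 0$ with $|t-d_L(x)|\le r$ there is a point $z$ with
$$d_L(z)=t, \qquad d(x,z)=|t-d_L(x)|.$$
Given this, $d_L(B_r(x))=[d_L(x)-r,d_L(x)+r]\cap[0,\infty)$: the inclusion ``$\subseteq$'' holds because $d_L$ is $1$-Lipschitz, and ``$\supseteq$'' follows from the points $z$ just constructed. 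This is exactly the local submetry property, and clearly $L=d_L^{-1}(0)$.

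\emph{Downward levels} ($t<d_L(x)$). Let $p=\pi(x)$ and let $\gamma$ be a minimal geodesic from $x$ to $p$. Along $\gamma$ we have $d_L(\gamma(s))=d_L(x)-s$: the value $d_L(\gamma(s))$ is at most $d(\gamma(s),p)=d_L(x)-s$, and at least this by the triangle inequality. So the point $\gamma(d_L(x)-t)$ works.

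\emph{Upward levels} ($t>d_L(x)$, $x\notin L$). This is the step I expect to be the main obstacle. It requires the Riemannian analogue of Federer's extension property for sets of positive reach. Concretely, if $q\in L$ and $v$ is a unit vector with $d_L(\exp_q(sv))=s$ for some small $s>0$, then the same equality persists for all $s$ up to a uniform bound determined by the reach on a compact neighbourhood.

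I would prove this by contradiction, using uniqueness of foot points. Let $s_0$ be the supremum of those $s$ with $d_L(\exp_q(sv))=s$. If $s_0$ is below the reach, then points $\exp_q(sv)$ with $s$ slightly larger than $s_0$ have foot points $q_s\neq q$. A limiting argument, combined with first variation at $\exp_q(s_0v)$, then shows that this point has two distinct closest points in $L$, or a degenerate focal configuration. Either contradicts uniqueness of the projection in $U(L)$; this is the standard argument of Federer, Bangert and Lytchak.

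With the extension property in hand, take $q=\pi(x)$ and $v$ the initial direction of the geodesic from $q$ to $x$. Moving further along this geodesic increases $d_L$ at unit speed, so it reaches level $t$ at distance exactly $t-d_L(x)$ from $x$.

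\emph{Points of $L$} ($x\in L$, $0<t\le r$). Here nowhere density is used. Choose points $z_n\notin L$ with $z_n\to x$, and let $q_n=\pi(z_n)$; then $d(q_n,x)\le 2d(z_n,x)\to 0$. By the extension property, the normal geodesic from $q_n$ through $z_n$ contains a point $w_n$ with $d_L(w_n)=t$ and $d(q_n,w_n)=t$, so $d(x,w_n)\le t+o(1)$. By compactness, a subsequence of $w_n$ converges to some $w$ with $d_L(w)=t$ and $d(x,w)\le t$. The reverse inequality $d(x,w)\ge t$ is automatic, since $x\in L$ gives $d(x,w)\ge d_L(w)=t$.

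Together the three cases give the required points $z$ for all admissible $t$, so $d_L$ is a local submetry on $U(L)$.
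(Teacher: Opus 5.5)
There is a genuine gap: the extension property, which carries essentially all the content of the proposition, is not established by the mechanism you sketch. The paper does not prove this statement itself; it quotes it from \cite{KL20}. Your reduction is sound. The downward step, the derivation of the upward step from an extension property, and the treatment of points of $L$ via nowhere density and compactness all work, given a uniform extension length.

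Put $y_0=\exp_q(s_0v)$. Since $y_0$ has a unique foot point, the projection is continuous at $y_0$, so the foot points $q_s$ converge to $q$ as $s\downarrow s_0$. Hence no second closest point ever appears at $y_0$. A ``focal'' stopping point is also compatible with a unique foot point there: for the parabola $\{y=x^2\}$ the normal ray from the vertex stops minimizing at the centre of curvature $(0,1/2)$, whose foot point is unique. So no limiting or first-variation argument at $y_0$ itself can give a contradiction. You must use the hypothesis on a whole neighbourhood of $y_0$.

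In a Riemannian manifold, uniqueness of foot points is not even the right hypothesis. For $L=\{p\}\subset\Sph^2$ every point has the unique foot point $p$, yet the normal geodesics stop minimizing at $-p$. Beyond $-p$ one has $q_s=q=p$, contradicting your claim $q_s\neq q$. What is actually needed is that near $y_0$ the minimizing geodesic to $L$ is unique, so that its outgoing unit direction $\nu(z)$ at $z$ is a continuous vector field.

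The missing step is topological; this is where Federer uses Brouwer's fixed point theorem (cf.\ \cite[Theorem 4.8]{Fed59}). Take $0<\rho<d_L(y_0)/2$ small, and let $\hat\nu(z)$ be the parallel transport of $\nu(z)$ to $y_0$ along the short geodesic. The map $z\mapsto\exp_{y_0}(\rho\,\hat\nu(z))$ sends $\bar B_\rho(y_0)$ continuously into itself, so it has a fixed point $z$. At $z$ the unit-speed geodesic from $y_0$ arrives with velocity $\nu(z)$, so $y_0$ lies on the minimizing geodesic from $z$ to $L$. Hence $\pi(z)=\pi(y_0)=q$, $z=\exp_q((s_0+\rho)v)$ and $d_L(z)=s_0+\rho$.

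An alternative avoids Brouwer. Continuity of $\nu$ makes $d_L$ a $C^1$ function with $\nabla d_L=\nu$ off $L$. Any Peano integral curve of $\nabla d_L$ is then a unit-speed curve along which $d_L$ grows at unit rate, hence a minimizing geodesic; it directly gives the upward points $z$ with $d(x,z)=t-d_L(x)$.

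Either argument shows that the set of admissible $s$ is open. Together with closedness, the normal geodesic keeps realizing the distance as long as it stays in the region where $\nu$ is continuous. This also yields the uniform extension length you need in the case $x\in L$.
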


The structure of subsets of positive reach is well understood
and it does not depend on the Riemannian metric,
but only on the $C^{1,1}$-atlas.
Such sets were investigated
in particular in \cite{Fed59}, \cite{Ban82},
\cite{Lyt04_pr}, \cite{Lyt05},
\cite{RatZaj17}, \cite{Lyt23}.

Notice also that 
the topological dimension $\dim L$ of a set of positive reach $L$
coincides with its Hausdorff-dimension \cite{Fed59}.
The set of positive reach $L$ in $M$
has a well-defined \textit{tangent cone} $T_xL$ 
that is a convex subset of $T_xM$
for all $x\in L$~\cite[Theorem 4.8]{Fed59}
and  $\dim L$ is the maximum of the 
dimensions of convex cones $T_xL$.

Let $L_{reg}$ be the set of all points $x\in L$ such that 
the tangent cone $T_xL$ is isometric to $\R^{\dim L}$.
Then, by \cite[Theorem 7.5]{RatZaj17} the subset $L_{reg}$
is not-empty, open in $L$ and it is a $C^{1,1}$-submanifold of $M$. 
In case $L\backslash L_{reg}$ is empty, 
$L$ is a $C^{1,1}$-manifold without boundary. 
Moreover:
\begin{proposition}[Proposition 1.4., \cite{Lyt05}]\label{prop:pr_reg}
    Let $L$ be a set of positive reach in $M$. 
    Then the following are equivalent:
    \begin{itemize}
        \item[1)] The set $L$ is a $C^{1,1}$-submanifold;
        \item[2)] The set $L$ is a topological manifold;
        \item[3)] All tangent spaces $T_xL$ for $x \in L$ are Euclidean spaces. 
    \end{itemize}
\end{proposition}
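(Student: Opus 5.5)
The implication $1)\Rightarrow 2)$ is immediate, so the work lies in $2)\Rightarrow 3)$ and $3)\Rightarrow 1)$. Throughout, I would work in a small normal chart around a point $x\in L$. Since positive reach depends only on the $C^{1,1}$-atlas, I may assume $L$ is a set of positive reach in a neighbourhood of $0\in\R^n$ with respect to a metric that is $C^{1,1}$-close to Euclidean. By Federer, $L$ then has a convex tangent cone $T_xL$ at each point.

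For $3)\Rightarrow 1)$ I would use the regular part. If all $T_xL$ are Euclidean, the only issue is that their dimension might vary, while $L_{reg}$ was defined with respect to the global $\dim L$. So I would first show that the function $x\mapsto \dim T_xL$ is locally constant. Let $k=\dim T_xL$ at a point $x$. By upper semicontinuity of tangent cones of positive-reach sets (limits of $T_{x_i}L$ are contained in $T_xL$, which follows from the normal-cone characterisation), we get $\dim T_yL\le k$ for $y$ near $x$. Now apply \cite[Theorem 7.5]{RatZaj17} to the positive-reach set $L\cap \bar B_\e(x)$. Its dimension is then $k$, and its regular part is open and nonempty. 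A set of positive reach near $x$ is the limit of its rescalings to the flat cone $T_xL=\R^k$, and the closest-point projection to $x+T_xL$ is bi-Lipschitz and onto a neighbourhood for small $\e$. From this it follows that $x$ is itself a point with $k$-dimensional Euclidean cone, so $x\in L_{reg}$. Hence $L=L_{reg}$ locally, and $L$ is a $C^{1,1}$-submanifold, possibly with components of different dimensions.

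For $2)\Rightarrow 3)$ the key step, and the one I expect to be the main obstacle, is to show that the local topology of $L$ at $x$ coincides with that of $T_xL$ at $0$. Concretely, I want
$$H_*(L,L\setminus\{x\})\cong H_*(T_xL,T_xL\setminus\{0\}),$$
together with equality of local dimensions. I would try first a rescaling argument. The sets $\lambda^{-1}(L-x)$ have reach at least $\lambda^{-1}\mathrm{reach}(L)\to\infty$ and converge in the pointed Hausdorff sense to $T_xL$ as $\lambda\to0$. Sets of reach bounded below are uniformly locally contractible, since the nearest-point retraction of a tubular neighbourhood is a controlled deformation. Hence, by the standard Petersen/Borsuk-type argument for uniformly locally contractible convergence, small balls in $L$ around $x$ are homotopy equivalent to balls in the cone, and the local homology is preserved. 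Alternatively, I would try to realise this more directly, by showing that the closest-point projection onto $L$ restricted to $(x+T_xL)\cap B_\e(x)$ is a homeomorphism onto a neighbourhood of $x$ in $L$.

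Granting this, suppose $L$ is a topological $m$-manifold near $x$. Then $T_xL$ is a convex cone whose local homology at $0$ is that of $\R^m$. A closed convex cone $C$ of dimension $d$ is a topological $d$-ball near $0$ with $0$ on its relative boundary unless $C$ is a linear subspace. In the non-linear case $H_d(C,C\setminus 0)=0$, and all local homology vanishes, because $C\setminus 0$ is contractible. This contradicts the manifold property. Hence $T_xL$ is a linear subspace, of dimension $m$, which gives $3)$.
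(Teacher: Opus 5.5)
The paper gives no proof of this proposition; it quotes it from \cite{Lyt05}. Judged on its own, your argument has two genuine gaps.

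\textbf{$3)\Rightarrow 1)$.} The semicontinuity you invoke goes the wrong way. Closedness of the normal bundle of a set of positive reach (Federer) gives $\limsup_i N_{x_i}L\subset N_xL$. By polarity this yields $T_xL\subset\liminf_i T_{x_i}L$, so you only get $\dim T_yL\ge\dim T_xL$ for $y$ near $x$.

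The inequality $\dim T_yL\le k$ that you need is false for general sets of positive reach, even when $T_xL$ is Euclidean. Take the ``bowtie'' $L=\{(u,v):|v|\le u^2\}\subset\mathbb{R}^2$. It has reach $\ge 1/2$ by Federer's criterion $\mathrm{dist}(b-a,T_aL)\le|b-a|^2/(2r)$. At boundary points $a\neq 0$, $L$ coincides near $a$ with $\{v\le u^2\}$ or with $\{v\ge -u^2\}$; both sets contain $L$ and have reach $1/2$. At $a=0$ one has $\mathrm{dist}(b,T_0L)=|v|\le|b|^2$ for $b=(u,v)\in L$. Yet $T_0L$ is a line, while $T_yL=\mathbb{R}^2$ at interior points $y\to 0$.

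So local constancy of the dimension has to come from hypothesis $3)$ at the nearby points. You must show that a point with $T_xL\cong\mathbb{R}^k$ cannot lie in the closure of the stratum where $\dim T_yL=m>k$. One way is an inscribed-disc argument inside that $C^{1,1}$ stratum. It produces a frontier point $z$ with $\dim T_zL\ge m$, whereas lower semicontinuity together with $3)$ forces $T_zL\cong\mathbb{R}^j$ with $j<m$.

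Without such an argument, \cite[Theorem 7.5]{RatZaj17} only gives $x\in L_{reg}$ at points where $k$ is the local maximum of the dimension. Also, your sentence ``$x$ is itself a point with $k$-dimensional Euclidean cone'' restates the hypothesis rather than proving anything. What is actually needed is $\dim(L\cap\bar B_\varepsilon(x))=k$.

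\textbf{$2)\Rightarrow 3)$.} Your reduction to local homology is fine, and so is the final convex-cone step ($C\setminus\{0\}$ is star-shaped when $C\neq -C$). The comparison of local homology itself, however, is not established.

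For $\varepsilon$ small relative to the reach, $L\cap\bar B_\varepsilon(x)$ is contractible, and so are balls in $T_xL$. A homotopy equivalence between them therefore says nothing about $H_*(L,L\setminus\{x\})\cong\tilde H_{*-1}(L\cap\bar B_\varepsilon(x)\setminus\{x\})$. The punctured balls, to which you would really have to apply Petersen's theorem, are neither compact nor uniformly locally contractible near the puncture. You need two further ingredients, for instance:
(a) $L\cap\bar B_\varepsilon(x)\setminus\{x\}$ deformation retracts onto $L\cap S_\varepsilon(x)$. This uses that $d_x|_L$ has no critical points near $x$: a limit of unit vectors $(y_i-x)/|y_i-x|\in N_{y_i}L$ would be a unit vector in $T_xL\cap N_xL$, which is impossible.
(b) The rescaled sections $\varepsilon^{-1}(L\cap S_\varepsilon(x)-x)$ have reach bounded below and converge to $T_xL\cap S^{n-1}$. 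Petersen's theorem genuinely applies there.

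Your alternative route is false in general. For the bowtie, $x+T_xL$ lies inside $L$, so the closest-point projection is the identity on it, and its image is not a neighbourhood of $0$ in $L$. Such a homeomorphism is only available a posteriori, once $L$ is already known to be a manifold.
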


If  $L\backslash L_{reg}$ is not empty,  then
$L\backslash L_{reg}$ is locally closed subset of positive reach 
of dimension $\leq \dim L-1$. 
The following Lemma is a direct corollary of \cite[Theorem D]{RatZaj17} 
and \cite[Theorem 1.1]{Lyt23} adapted for our case.
\begin{lemma}\label{Lem:1dim_pr}
    Let $L\neq \emptyset$ be a connected 
    set of positive reach and empty interior in $\R^2$ or $\Sph^2$. 
    Then $L$ is either a point or a $C^{1,1}$ curve 
    (one-dimensional manifold) with or without boundary.    
\end{lemma}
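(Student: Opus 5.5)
The plan is to reduce Lemma~\ref{Lem:1dim_pr} to the structure theory of sets of positive reach recalled above, using the dimension of $L$ to split into cases. Since $L$ has empty interior in a two-dimensional manifold, and topological and Hausdorff dimension agree for sets of positive reach \cite{Fed59}, we have $\dim L\in\{0,1\}$. Recall that $\dim L$ is the maximal dimension of the convex tangent cones $T_xL$.

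\emph{Case $\dim L=0$.} Every tangent cone $T_xL$ is then the origin. A set of positive reach has only finitely many points in any compact piece of its reach neighbourhood: an accumulation point would produce a nontrivial tangent direction. Hence $L$ is locally finite. Since $L$ is connected, it is a single point.

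\emph{Case $\dim L=1$.} Each tangent cone $T_xL$ is a convex cone of dimension at most one. It is therefore either $\{0\}$, a ray, or a line, and a line is exactly the regular case $x\in L_{reg}$.
\begin{enumerate}
\item[(a)] \emph{Isolated points.} If $T_xL=\{0\}$ at some point $x$, then $x$ is isolated in $L$ by the same argument as in the zero-dimensional case. Connectedness of $L$ then forces $L=\{x\}$, contradicting $\dim L=1$.
\item[(b)] \emph{The singular set.} The remaining singular points, those with $T_xL$ a ray, form $L\setminus L_{reg}$. This is a locally closed set of positive reach of dimension $\le 0$, hence discrete by the previous case.
\item[(c)] \emph{Local structure near a singular point.} Let $T_xL$ be a ray at $x$. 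I would invoke \cite[Theorem D]{RatZaj17} and \cite[Theorem 1.1]{Lyt23}, which describe sets of positive reach locally as finite unions of $C^{1,1}$ pieces meeting in a controlled way. Convexity of the tangent cone (a single ray, not two rays) shows that only one $C^{1,1}$ arc of $L_{reg}$ can emanate from $x$. So near $x$ the set $L$ is a $C^{1,1}$ arc with endpoint $x$, i.e.\ a boundary point.
\end{enumerate}
Thus $L$ is a connected topological $1$-manifold with boundary whose interior is $L_{reg}$. By Proposition~\ref{prop:pr_reg}, applied to the interior and extended to the endpoints via (c), it is a $C^{1,1}$ curve.

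The main obstacle is step (c): excluding branching or cusp-like accumulation at singular points. In particular, one must rule out two arcs meeting at an angle, which would give a cone of two rays and violate convexity, and infinitely many tiny arcs accumulating at $x$. For the latter I would first try the reach neighbourhood: uniqueness of the foot point within distance $r$ prevents arcs of $L$ from approaching each other closer than a quantity comparable to the reach. Combined with the $C^{1,1}$ curvature bound $1/r$ on $L_{reg}$, this gives only finitely many arcs locally. This is also precisely where the cited results of Rataj--Zaj\'i\v{c}ek and Lytchak would be used, so in the write-up I would quote them for this step.
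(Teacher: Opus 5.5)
Your proposal is correct in the same sense as the paper's argument, which consists only of declaring the lemma a direct corollary of \cite[Theorem D]{RatZaj17} and \cite[Theorem 1.1]{Lyt23}; your dimension and tangent-cone case analysis is scaffolding around exactly these citations, which carry the only nontrivial step (c). One caution about (c): convexity of $T_xL$ alone does not prevent two arcs from leaving an endpoint $x$ tangentially in the same direction (the cone is still a single ray), so this configuration must be excluded by uniqueness of foot points (points between the two branches would have two nearest points in $L$) or by the cited structure theorem itself, not by convexity.
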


\section{Properties and preparations}\label{sec:prep}

\subsection{Transnormal regions}\label{subsec:tr_reg}
Let $P:M\to Y$ be a submetry. 
For any point $x\in M$ consider the fiber $L=P^{-1}(P(x))$ passing through $x$.
 As we said before, the fiber $L$ 
 has a well defined tangent cone $T_xL$. 
The \textit{normal (or horizontal) cone} 
$N_xL$ is the polar cone of $T_xL$ in $T_xM$.
In other words, $N_xL$ is the set of all vectors in $T_xM$ 
enclosing angles $\geq \pi/2$ with all vectors in $T_xL$. 

For any point $x\in L$ and unit vector $h\in N_xL$,
the geodesic $\gamma_{h}:[0,l)\to M$ starting in $x$
in the direction $h$ and parametrized by arclength
satisfies 
\begin{equation}\label{eq:hor_geod}
    dist \left(L, \gamma_h(s) \right)=s,
\end{equation}
for all $s$ such that  $\gamma_h(s)\in U(L)$. 
Recall that $U(L)$ is the maximal open set in 
which $L$ has positive reach.

Note that the equation (\ref{eq:hor_geod}) does not guarantee
that the geodesic $\gamma_h$ starting in the horizontal direction
$h\in N_xL$ is itself a horizontal curve for any $s \in (0, l)$. 
However, it is true locally, meaning that for any $x\in M$
there is $r_x\in (0, l)$, such that 
the geodesic $\gamma_h:[0, r_x) \to M$ is a horizontal curve
and a $P$-minimal geodesic (see \cite[Proposition 7.3]{KL20}).   

We can identify those regions in $M$ 
on which $\gamma_h$ restricts to a global horizontal curve. Namely,
\begin{definition}\label{def:tran_region}
    Let $P:M\to Y$ be a submetry and
    let $\V$ be an open set in~$M$. 
    Let $\gamma_h:[0,l)\to M$ be a geodesic emanating at $x \in \V$ 
    in the horizontal direction $h\in N_xL$.  
    Assume  $\gamma_h$ is perpendicular to any fiber it intersects
    for all times when $\gamma_h$ is in $\V$.     
    Then the set $\V$ is called a \textbf{transnormal region} of the
    manifold $M$. 
\end{definition}
If $\V=M$, then the submetry $P:M\to Y$ is called \textit{transnormal}.

\begin{lemma}\label{lem:tran_region}
    Let $P:M\to Y$ be a submetry 
    and let $L=P^{-1}(P(x))$ denote the fiber through any point $x\in M$.
    Then an open set $\V \subset M$ is a transnormal region of $M$ 
    if and only if 
     $L\cap \V \subset L_{reg}$ for any $x\in \V$.    
\end{lemma}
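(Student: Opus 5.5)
We prove the two implications separately. In both directions the key tool is the local description of a fiber near a regular point. If $x\in L_{reg}$, then near $x$ the fiber $L$ is a $C^{1,1}$-submanifold by \cite[Theorem 7.5]{RatZaj17}. Hence $T_xL$ is a linear subspace and $N_xL$ is its orthogonal complement, which is also a linear subspace. If instead $x\in L\setminus L_{reg}$, then $T_xL$ is a convex cone that is not a linear space. In that case $N_xL$ is a convex cone strictly larger than $(\mathrm{span}\,T_xL)^\perp$ and is not closed under $h\mapsto -h$.

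For the direction ``$L\cap\V\subset L_{reg}$ for all $x\in\V$ $\Rightarrow$ $\V$ is transnormal'', take a geodesic $\gamma_h$ starting at $x\in\V$ with $h\in N_xL$. Let $t_0$ be the supremum of the times $t$ such that $\gamma_h$ stays in $\V$ on $[0,t]$ and is perpendicular to every fiber it meets there. By \cite[Proposition 7.3]{KL20}, $\gamma_h$ is horizontal and $P$-minimal on an initial interval, so $t_0>0$.

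Suppose $t_0$ is smaller than the exit time from $\V$, and set $z=\gamma_h(t_0)$ with fiber $L'$. Since $z\in L'_{reg}$, the cone $N_zL'$ is a linear space. We first show that $\gamma_h'(t_0)\in N_zL'$. This follows by continuity: near $z$ the fibers are $C^{1,1}$, $\gamma_h$ is perpendicular to the nearby fibers at earlier times, and Proposition~\ref{basic} together with the first variation formula for $d_{L'}$ forces the limit direction to be normal.

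The incoming direction $-\gamma_h'(t_0)$ is then also normal, since $N_zL'$ is a linear space. Applying \cite[Proposition 7.3]{KL20} at $z$ in the direction $\gamma_h'(t_0)$, we extend perpendicularity past $t_0$. This contradicts the maximality of $t_0$.

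For the converse, suppose some fiber point $z\in L'\cap\V$ is not regular. Then $T_zL'$ is a non-linear convex cone. Choose a unit vector $v\in T_zL'$ such that $-v\notin T_zL'$; for instance, $v$ can point into the cone along the direction of a boundary point of the fiber. Then $-v$ lies in $N_zL'$, or a suitable nearby vector $h\in N_zL'$ has the property that $-h\notin N_zL'$.

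Consider the geodesic through $z$ with direction $h$ at $z$, started slightly before $z$ at a point $x=\gamma(-\e)\in\V$. The direction $\gamma'(-\e)$ is horizontal, because it is $P$-minimal toward $L'$, being the reverse of a $P$-minimal geodesic leaving $z$. So $\gamma$ starts in a horizontal direction. However, at time $0$ it hits $z$ with incoming direction $-h\notin N_zL'$, so it is not perpendicular to $L'$ there, and $\V$ is not transnormal.

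I expect this converse to be the main obstacle. One must produce a horizontal geodesic in $\V$ that actually passes through the singular point in a non-normal direction. My first attempt will use the convexity of $T_zL'$ and the polar duality $N_zL'=(T_zL')^\circ$. If $N_zL'$ were symmetric, then $T_zL'=(N_zL')^\circ$ would be a linear space, contradicting $z\notin L'_{reg}$. So there is $h\in N_zL'$ with $-h\notin N_zL'$.

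The geodesic $t\mapsto\exp_z(th)$ for small $t>0$ is $P$-minimal by \cite[Proposition 7.3]{KL20}. Reversing it, its initial direction at $\exp_z(\e h)$ is normal to that point's fiber, because it realizes the distance to $L'$ locally. It then arrives at $z$ with velocity $-h\notin N_zL'$, which violates the transnormality condition of Definition~\ref{def:tran_region}.
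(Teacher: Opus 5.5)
Your proof is correct and follows the paper's argument in both directions. For sufficiency, you take a maximal time and use the linearity of $N_zL'$ at the regular point $z$ to continue the horizontal geodesic via \cite[Proposition 7.3]{KL20}; for necessity, you reverse a short horizontal geodesic leaving the singular point in a direction $h\in N_zL'$ with $-h\notin N_zL'$, where your polar-duality justification for such an $h$ is cleaner than the paper's bare assertion and should replace your tentative claim that $-v\in N_zL'$, which fails in general.
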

Recall  that $L_{reg}$ is the set of all points $x\in L$ such that 
the tangent cone $T_xL$ is isometric to $\R^{\dim L}$,
and $L_{reg}$  is a $C^{1,1}$-submanifold of $M$. 

The proof of Lemma~\ref{lem:tran_region} almost word-for-word repeats 
a the part of the proof of \cite[Proposition 12.5]{KL20}.
For completeness, we include it here as well.

\begin{proof}[Proof of Lemma~\ref{lem:tran_region}]
    To prove the forward direction, we proceed by contradiction.
    Let $\V \subset M$ be a transnormal region of the submetry $P:M\to Y$. 
    Assume now, that there is a point $x\in\V$ and 
    a fiber $L$ through $x$ such that $L\cap \V \not\subset  L_{reg}$. 
    It means, there is a point $y\in L\cap \V$ such that 
    the tangent space $T_yL$ is not a vector space. 
    Therefore, the normal space $N_xL$ is also not a vector space. 
    Hence, we can find a unit vector $h\in N_yL$ such 
    that $-h \not\in N_yL$. 
    Since $\V \subset M$ is an open set, then there is a small 
    neighborhood $\U$ of the point $y$ such that $\U \subset \V$. 
    Hence, there is a  small $\e >0$ such that 
    the geodesics $\gamma_h:[0,\e] \to M$ and $\gamma_{-h}:[0,\e] \to M$
    are both in $\U$.
    However,  $\gamma_h:[0,\e] \to M$ is the horizontal geodesic,
    while $\gamma_{-h}:[0,\e] \to M$ is not.
    If we then take a vector $v=-\gamma'_h(t)$ for $t\in (0, \e)$,
    then the geodesic in the direction $v$ does not stay perpendicular
    for all fibers it intersect inside~$\V$, a contradiction. 

    The sufficiency can also be shown by contradiction.
    Assume  $\gamma_h:[0, l)\to M$ is the geodesic
    starting at the point $x\in \V$ in the horizontal direction $h$
    and lies inside $\V$ for all times $[0, l)$.
    By \cite[Proposition 7.3]{KL20}, there exist $r\in (0,l]$
    such that $\gamma_h:[0,r] \to M$ is a horizontal curve. 
    Assume $r$ is  maximal with this property and $r<l$.
    Then $\gamma_h$ meets the fiber $L_r:=(P^{-1}\circ P)(\gamma_h(r))$
    in the horizontal direction $-\gamma'_h(r) \in N_{\gamma_h(r)} L_r$.
    Moreover, the normal space $N_{\gamma_h(r)} L_r$ is a vector space. 
    Hence, the opposite direction $\gamma'_h(r)$ is also horizontal.
    Since the point $\gamma_h(r) \in \V$ and $\V$ is open, then 
    there is a small $\e>0$ such that the restriction 
    $\gamma_h: [r, r+\e) \to M$ is also a horizontal curve. 
    Since $r<l$, we get a contradiction to the minimality of $r$. 
    Hence, $\V$ is a transnormal region. 
\end{proof}

 The horizontal geodesic $\gamma_h$ leaves the transnormal region $\V$
 the moment it meets a fiber $L$ outside of its regular part, 
 i.e. when $\gamma_h(t) \in L \backslash L_{reg}$. 
 If this is the case, then the following result holds.
\begin{proposition}[Theorem 12.4 \cite{KL20}]\label{non_mnfd_f}
    Let $P:M\to Y$ be a local submetry.
    If a fiber $L=P^{-1}(y)$ is not a  $C^{1,1}$-submanifold of $M$ 
    then $y\in \d Y$.
\end{proposition}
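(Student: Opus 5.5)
The plan is to argue by contraposition. Assume $y$ is an interior point of $Y$, i.e. $y\notin\d Y$. We show that for every $x\in L=P^{-1}(y)$ the tangent cone $T_xL$ is a Euclidean space. Proposition~\ref{prop:pr_reg} then gives that $L$ is a $C^{1,1}$-submanifold. Since $T_xL$ is a closed convex cone and $N_xL$ is its polar, $T_xL$ is a linear subspace if and only if $N_xL$ is. It therefore suffices to prove that $N_xL=-N_xL$: for every unit $h\in N_xL$ we must show $-h\in N_xL$.

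First step (setup). Fix a unit vector $h\in N_xL$. By the local horizontality statement recalled before Definition~\ref{def:tran_region} (\cite[Proposition 7.3]{KL20}), the geodesic $\gamma_h:[0,\e]\to M$ is $P$-minimal for some small $\e>0$. Set $p=\gamma_h(\e)$. The reversed curve $\bar\gamma(t)=\gamma_h(\e-t)$ is then a $P$-minimal geodesic from $p$ to $x$, arriving at $x$ with velocity $-h$. Its image $c=P\circ\bar\gamma$ is a geodesic in $Y$ of length $\e$ from $P(p)$ to $y$.

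Second step (extension in the base). Since $y\notin\d Y$, we want to extend $c$ past $y$ to a curve $\hat c:[0,\e+\delta]\to Y$ that is still a shortest path from $P(p)$. This should be done infinitesimally, by passing to the tangent cone $T_yY=\R^l\times T_y^0Y$. The direction of $c$ at $y$ must have an antipode at distance $\pi$ in $\Sigma_yY$, realized by a direction along which $c$ continues minimally. To obtain this, I would use the infinitesimal submetry $D_xP:T_xM\to T_yY$ and its restriction $N_xL\cap S\to\Sigma_yY$ to the unit normal directions, which is again a submetry. I would then argue by induction on dimension that, when $y$ is not a boundary point, the directions in $\Sigma_yY$ over $P$-images of horizontal directions behave like those of a space without boundary. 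In the present paper only the case $\dim Y=1$ is needed, and there this step is trivial: an interior point of a one-dimensional $Y$ has two opposite directions, and the geodesic $c$ extends beyond $y$ as a geodesic.

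Third step (lifting and conclusion). Lift the extension $\hat c|_{[\e,\e+\delta]}$ horizontally starting at $x$, as in Subsection~\ref{subsec:lift}. Concatenating with $\bar\gamma$ gives a curve of length $\e+\delta$ from $p$ whose endpoint lies in the fiber at distance $\e+\delta$ from $P^{-1}(P(p))$. Hence the concatenation realizes the distance between these two fibers, so it is a $P$-minimal geodesic in $M$. In particular it is smooth at $x$, so it leaves $x$ in the direction $-h$. Its second piece starts at $L$ in direction $-h$ and is horizontal, so by (\ref{eq:hor_geod}) it satisfies $-h\in N_xL$. As $h$ was arbitrary, $N_xL$ is symmetric and the argument closes as described in the first paragraph.

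The main obstacle is the second step in arbitrary dimension. In a general Alexandrov space a geodesic need not extend through an interior point; only quasigeodesics are guaranteed to extend. I would first try to replace geodesic extension by quasigeodesic extension, and then use Proposition~\ref{quasigeod} to get a minimizing piece immediately after $y$. If that fails, I would fall back on the inductive argument on the spaces of directions through the infinitesimal submetry. For the one-dimensional bases treated in this paper the difficulty disappears.
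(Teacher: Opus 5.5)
\textbf{Verdict: genuine gap.} Your argument proves the one-dimensional case, but not the proposition for an arbitrary local submetry, and the general strategy cannot be repaired as planned. The paper gives no proof here; it quotes the result from \cite[Theorem 12.4]{KL20}.

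\textbf{Step 2 is false in general, not just unproved.} Your method needs the concatenation $\bar\gamma\ast(\text{lift})$ to be $P$-minimal. Then $\hat c$ would be a shortest path with $y$ in its interior, so the direction of $c$ at $y$ would need an antipode in $\Sigma_yY$. Non-boundary points need not have this.

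Take the quotient $P\colon\R^2\to\R^2/\Z_k$, $k\ge 2$, by a rotation group, and let $y$ be the vertex. Then $\Sigma_yY$ is a circle of length $2\pi/k\le\pi$, so $y\notin\d Y$. Yet no two directions at $y$ are at angle $\pi$, and no shortest path ending at $y$ can be continued minimally. The conclusion holds trivially there, since $L=\{0\}$, but your argument cannot reach it. A correct proof must avoid forcing the extension through $x$ to be distance-realizing.

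\textbf{The fallbacks do not close the gap.}
\begin{itemize}
\item \emph{Quasigeodesic extension.} If you extend $c$ only as a quasigeodesic, the concatenation no longer realizes the distance between fibers. So nothing makes it a geodesic of $M$ that is smooth at $x$. Horizontal lifts starting at $x$ are also not unique, so nothing pins the lift's initial direction to $-h$.
\item \emph{Circularity.} The natural way to link $\gamma_{-h}$ to a quasigeodesic of $Y$ is Proposition~\ref{quasigeod}. That result concerns horizontal geodesics, as the surrounding text indicates, so applying it to $\gamma_{-h}$ assumes $-h\in N_xL$, which is what you want to prove.
\item \emph{Induction on spaces of directions.} This is only named. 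One would need to show, via the differential $D_xP\colon T_xM\to T_yY$ (whose fiber over the vertex is the cone $T_xL$), that a non-linear $T_xL$ forces $\Sigma_yY$ to have boundary. That is where the real work lies, and the proposal does not contain it.
\end{itemize}

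\textbf{What is correct: the case $\dim Y=1$.}
\begin{itemize}
\item An interior point has an open-interval neighbourhood, so $c$ extends minimally for small $\e,\delta$.
\item The concatenation is then distance-realizing, hence a smooth geodesic of $M$.
\item Its second half is $P$-minimal from $L$ in direction $-h$, which gives $-h\in N_xL$. This last implication is the first variation formula; (\ref{eq:hor_geod}) states the converse.
\end{itemize}
Since the paper only applies the proposition to submetries onto segments, this case covers every use made of it here. It does not, however, prove the proposition as stated.
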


By \cite[Proposition 12.7]{KL20}
 transnormal submetries have the \textit{equifocality} property.
It means the following.
Let $P:M\to Y$ be a transnormal submetry and let 
$\gamma_1, \gamma_2:[0,s)\to M$ be two horizontal geodesics
starting at two different points $\gamma_1(0)\neq \gamma_2(0)$ on a fiber $L$.
If $P\circ \gamma_i$ coincide on $[0, \e)$ for some $\e \in (0, s)$, 
then $P\circ \gamma_i$  coincide along the whole segment $[0, s)$.
The same is true  when both geodesics $\gamma_1$ and $\gamma_2$ 
lie  in the transnormal region $\V$ of $M$. 

In this work, we are mostly interested in the case 
when $Y$ is a one-dimensional manifold. 
Here, equifocality for transnormal regions is clear.

\subsection{Submetry with connected leaves}\label{subsec:connected}

When the map  $P:M\to Y$ is a fiber bundle, 
then using the long exact sequence of homotopy groups 
one can deduce that
if the total space $M$ is simply connected and
all fibers are connected manifolds,
then the base space $Y$ is also simply connected. 
A similar statement is true when the map $P:M\to Y$ is a submetry.

\begin{lemma}\label{lem:smpl_con}
   Let $P:M\to Y$ be a submetry between  a complete, 
   simply-connected Riemannian manifold $M$
   and a connected metric space $Y$. 
   If all fibers of $P$ are connected, then $Y$ is also simply connected. 
\end{lemma}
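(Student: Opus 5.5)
Given a loop $c:[0,1]\to Y$ based at $y_0=P(x_0)$, we lift it to $M$, close the lift inside the fiber $P^{-1}(y_0)$, contract it in $M$, and push the contraction back down by $P$.

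\emph{Step 1: reduce to rectifiable loops.} By the discussion in Subsection~\ref{subsec:def}, $Y$ is a proper length space and is locally contractible; $\exp_y$ identifies small balls in $Y$ with contractible balls in $T_yY$. Hence every loop in $Y$ is homotopic to a rectifiable one: subdivide finely so that consecutive points lie in a common contractible ball, and replace each short arc by a geodesic inside that ball.

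\emph{Step 2: lift and close up.} Take a rectifiable loop $c$ based at $y_0$. By the horizontal lift property (\cite[Lemma 2.8]{KL20}, Subsection~\ref{subsec:lift}) there is a horizontal curve $\gamma$ with $\gamma(0)\in P^{-1}(y_0)$ and $P\circ\gamma=c$. Its endpoint $\gamma(1)$ also lies in $L=P^{-1}(y_0)$. The fiber $L$ is connected by assumption, and by Proposition~\ref{fibers_pr} it is a set of positive reach. Sets of positive reach are locally contractible: the closest-point projection retracts a neighborhood onto $L$. A connected, locally path-connected space is path connected, so there is a path $\beta$ in $L$ from $\gamma(1)$ to $\gamma(0)$. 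The concatenation $\tilde c=\gamma*\beta$ is a closed loop in $M$, and $P\circ\tilde c = c * (\text{constant at } y_0)$, which is homotopic to $c$.

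\emph{Step 3: push a contraction down.} Since $M$ is simply connected, there is a null-homotopy $H:[0,1]^2\to M$ of $\tilde c$. Because $P$ is $1$-Lipschitz, hence continuous, $P\circ H$ is a null-homotopy of $P\circ\tilde c$ in $Y$. Therefore $c$ is null-homotopic. Finally, $Y$ is connected and locally path-connected (it is a length space), so it is path connected, and hence simply connected.

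The main obstacle is Step~1: making sure every loop in $Y$ is homotopic to a rectifiable one, which depends on the local contractibility of $Y$. The path connectedness of fibers in Step~2 should follow routinely from positive reach. If Step~1 gives trouble, I would instead lift continuous paths directly: cover the loop by small balls, lift piecewise using the ball-to-ball surjectivity $P(B_r(x))=B_r(P(x))$, and control the errors with local contractibility.
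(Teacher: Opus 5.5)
Your proposal is correct and follows the paper's proof essentially step for step. Both arguments replace the loop by a homotopic piecewise-geodesic one using local contractibility of $Y$, take a horizontal lift via \cite[Lemma 2.8]{KL20}, close it up inside the connected fiber, contract it in $M$, and push the null-homotopy down by $P$. The only difference is that you justify the path-connectedness of the fiber $P^{-1}(y_0)$ via the closest-point retraction given by positive reach; the paper leaves this implicit when it says ``since $L$ is connected, there is a path on $L$''.
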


\begin{proof}
     Consider a loop $\tilde{c}(t):[0,l]\to Y$ 
     such that $\tilde{c}(0)=\tilde{c}(l)=p$.
     Since $Y$ is a locally contractible space, 
     $\tilde{c}(t)$ can be approximated by a piecewise geodesic, 
     closed loop $c(t):[0,b]\to Y$ homotopic to $\tilde{c}(t)$ and $c(0)=c(b)=p$. 
     The curve $c(t)$ is rectifiable and 
     therefore, by \cite[Lemma 2.8]{KL20},
     there exists a horizontal lift 
    $\gamma(t):[0, b]\to M$ of the curve  $c(t)$. 
    The points $\gamma(0)$ and $\gamma(b)$ are on the same fiber $L=P^{-1}(p)$ but they may not coincide. 
    Since $L$ is connected, then  there is a path on $L$ 
    connecting points  $\gamma(0)$ and $\gamma(b)$. 
    Hence, the curve $\gamma(t)$ can be extended to the loop
    $\tilde{\gamma}:[0,a] \to M$ where
    $\tilde{\gamma}(0)=\tilde{\gamma}(a)=q$.
    Since $M$ is simply connected,
    then there is a continuous homotopy map 
    $F(t,s):[0,a] \times [0,1] \to M$ such that 
    $F(t,0)=\tilde{\gamma}(t)$ and $F(t,1)=\{q\}$, i.e. 
    $F$ contracts the loop $\tilde{\gamma}$ 
    to the point $q$ within $M$.
    
   The map $P\circ F(t, \cdot)$ brings the loop from the total space
    to the base space for any $s \in[0,1]$ .    
    Hence, this map $P\circ F(t, \cdot)$ is the homotopy between the loop $c_p(t)$ and the point $P(q)$. 
    Therefore, the space $Y$ is also simply connected. 
\end{proof}

Necessary for us is the following corollary of Lemma~\ref{lem:smpl_con}.

\begin{cor}\label{cor:1}
    There is no submetry $P:M\to S^1$ with connected fibers 
    when $M$ is an Euclidean space $\R^n$ or spherical space $\Sph^n$ of  dimension $n\geq 2$. 
\end{cor}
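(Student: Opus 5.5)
The corollary should follow directly from Lemma~\ref{lem:smpl_con}. I will argue by contradiction: suppose $P:M\to S^1$ is a submetry with connected fibers, where $M=\R^n$ or $M=\Sph^n$ with $n\geq 2$. Here $S^1$ is a circle of some length, with its intrinsic metric, which is an admissible target since it is a one-dimensional Alexandrov space.

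The first step is to check the hypotheses of Lemma~\ref{lem:smpl_con}. The manifold $M$ is complete, and it is simply connected: $\R^n$ is contractible, and $\Sph^n$ is simply connected for $n\geq 2$. This is exactly where the restriction $n\geq 2$ enters, since for $n=1$ the identity map $\Sph^1\to S^1$ is a submetry with connected fibers. The base $S^1$ is connected, and the fibers are connected by assumption. Lemma~\ref{lem:smpl_con} therefore says that $S^1$ is simply connected.

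This contradicts $\pi_1(S^1)\cong\Z$. In particular, the loop going once around the circle is not null-homotopic, so no such submetry exists.

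There is no real obstacle in this argument. The only point needing care is that the proof of Lemma~\ref{lem:smpl_con} uses that $Y$ is locally contractible, so that loops can be replaced by homotopic piecewise geodesic ones. For $Y=S^1$ this holds trivially. It also agrees with the general fact recalled in Subsection~\ref{subsec:def} that bases of submetries from Riemannian manifolds are locally contractible.
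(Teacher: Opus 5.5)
Your proposal is correct and matches the paper, which states this corollary as an immediate consequence of Lemma~\ref{lem:smpl_con}: a complete simply connected $M$ (here $n\geq 2$ guarantees this for $\Sph^n$) with connected fibers forces the base to be simply connected, which contradicts $\pi_1(S^1)\cong\Z$. Your remark that the identity map $\Sph^1\to S^1$ shows why the hypothesis $n\geq 2$ is needed is a correct and useful sanity check.
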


\subsection{Submetry with discrete leaves}\label{subsec:discrete}

In this section, 
we present all submetries
$P:Y_0 \to Y$ with discrete fibers 
between the one-dimensional Alexandrov spaces $Y_0$ and $Y$.
In general, Alexandrov spaces are a special class of length spaces. 
We refer the reader to \cite{BBI01} for the definition and the background. 
The one-dimensional Alexandrov spaces are easy to describe.
Namely, they are one of the following: 
circle $S^1$, real line $\R$, half line $[0, \infty)$ or 
closed interval $[a,b]$.

\begin{lemma}[Corollary of Theorem 1.2~\cite{Lange20}]\label{lem:discr_fib}
    Let $P:Y_0 \to Y$ be a submetry between one-dimensional Alexandrov spaces 
    and let $P$ have discrete fibers.
    Then one of the following holds.
\begin{enumerate}
    \item If $Y_0=\R$, then $P$ is either a covering map onto $Y=\Sph^1$
            or $P$ is a folding map onto half-line $Y=[0, +\infty)$ or onto 
            an interval of the length~$a$.     
    \item If $Y_0=[0, +\infty)$, then $P$ is a folding map  
            onto an interval of length~$a$.
    \item If $Y_0$ is an interval of length~$1$,
            then  $P$ is a folding map onto 
            an interval of length~$1/k$, $k\in \N$.
    \item If $Y_0$ is a unit circle, then 
             $P$ is a covering map either onto 
             the circle of length~$1/k$, $k\in \N$,
             or onto a segment of length~$1/2k$, $k\in \N$.
\end{enumerate}
\end{lemma}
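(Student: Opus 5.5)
The plan is to use the one-dimensional structure of $Y_0$ and $Y$ together with \cite[Theorem 1.2]{Lange20}. By that theorem a submetry with discrete fibers between these spaces is locally an orbit map of a finite (here discrete) group of isometries. Together with the local contractibility of $Y_0$, this makes $P$ a branched covering of a very rigid kind, and we conclude that $Y=Y_0/G$ for a discrete group $G$ of isometries of $Y_0$. I would then classify the possibilities case by case. The only isometries of a one-dimensional Alexandrov space are translations, reflections and rotations, and the quotient must again be one of $S^1$, $\R$, $[0,\infty)$ or an interval.

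The first step is local. Take $y\in Y$ and a small ball $B_r(y)$. Its preimage $P^{-1}(B_r(y))$ is the $r$-neighbourhood of the discrete fiber $P^{-1}(y)$. By Proposition~\ref{basic} the map is $d_y\circ P=d_{P^{-1}(y)}$, so on each component interval $(x-r,x+r)$ around $x\in P^{-1}(y)$ the map $P$ is an isometry onto $B_r(y)$ or a fold onto it. A fold is only possible if $y\in\d Y$ and $B_r(y)=[y,y+r)$. This shows that $P$ is a local isometry away from preimages of boundary points and a fold at such preimages.

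The second step is to globalize. Take the deck-type group $G$ generated by the reflections at the fold points and the translations or rotations relating different sheets. Then $G$ acts by isometries on $Y_0$ with $P$ constant on orbits, and the local description shows $P$ induces an isometry $Y_0/G\to Y$. The orbifold covering theory of \cite{Lange20} gives this directly.

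The third step reads off the list. For $Y_0=\R$:
\begin{itemize}
\item if $G$ consists of translations, then $Y=S^1$;
\item if $G$ is generated by one reflection, then $Y=[0,\infty)$;
\item if $G$ is a dihedral group generated by two reflections, then $Y$ is an interval of length $a$.
\end{itemize}
For $Y_0=[0,\infty)$, a nontrivial $G$ must fix the boundary point, so only reflections at a second point occur. The quotient is then an interval, obtained by a folding map. For $Y_0$ a segment of length $1$, the group $G$ is generated by reflections, and compatibility with both endpoints forces the fold points to be equally spaced. This gives a segment of length $1/k$. For $Y_0$ the unit circle, $G$ is cyclic or dihedral of order $2k$. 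The quotient is then a circle of length $1/k$ or a segment of length $1/2k$.

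I expect the main obstacle to be the globalization step. There one must check that the sheets fit together into a group action, that is, that the local reflections really extend to global isometries of $Y_0$. For one-dimensional spaces this follows by unique continuation of isometries along intervals. The remaining difficulty is bookkeeping of the endpoint cases.
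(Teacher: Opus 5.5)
Your local step is correct: away from $P^{-1}(\d Y)$ the map $P$ is a local isometry, and at interior points of $Y_0$ lying over $\d Y$ it is a fold. The same argument also gives $P(\d Y_0)\subset\d Y$.

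The gap is in the globalization. You claim $Y=Y_0/G$ for a discrete group $G$ of isometries of $Y_0$, because the local reflections at fold points extend to global isometries of $Y_0$ by unique continuation. This is false as soon as $\d Y_0\neq\emptyset$. The continuation of the local reflection at a fold point $c$ is $z\mapsto 2c-z$, and this preserves $Y_0$ only if $Y_0$ is compact and $c$ is its midpoint. Concretely, $[0,\infty)$ has trivial isometry group and $[0,1]$ has isometry group $\Z/2$. Yet the folding maps $[0,\infty)\to[0,a]$ and $[0,1]\to[0,1/k]$ with $k\ge 3$ are exactly the submetries of items (2) and (3). Neither is the quotient map of a group acting on $Y_0$.

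So your case analysis for (2) and (3) (\emph{only reflections at a second point occur}, \emph{$G$ is generated by reflections}) uses maps that are not isometries of $Y_0$. Lange's orbifold covering theory does not give $Y=Y_0/G$ directly either. What it gives is $Y_0=\R/G_0$ and $Y=\R/G$ with $G_0\le G$, where $G_0$ need not be normal; for example, $G_0=\langle R_0\rangle$ sits inside the infinite dihedral group $\langle R_0,R_a\rangle$. Hence $Y$ is in general not a group quotient of $Y_0$. This common-cover formulation is exactly what the paper uses.

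Your argument does go through for $Y_0=\R$ and $Y_0=S^1$, where the reflections at fold points are genuine isometries. For the remaining cases you have two options.
\begin{enumerate}
\item Pass to the universal orbifold cover $\R$, as the paper does.
\item Drop groups altogether and finish from your local step. Let $c$ be an arclength parametrization of $Y_0$. Then $P\circ c$ is a unit-speed path in $Y$ that reverses direction exactly when it meets $\d Y$, so it is determined by its initial point and direction.
\begin{itemize}
\item In case (3), $Y$ is a segment of length $\ell$, and the path must start and end at its endpoints, which forces $1=k\ell$.
\item In case (4), the path must close up after time $1$. This forces $1=k\ell$ if $Y$ is a circle and $1=2k\ell$ if $Y$ is a segment.
\end{itemize}
This elementary route yields the whole list without appealing to Lange's theorem.
\end{enumerate}
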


\begin{proof}
All one-dimensional Alexandrov spaces 
are  quotients of the real line $\R$, namely $Y=\R/G$,
where $G$ is a discrete group of isometries. 
\begin{enumerate}
    \item[i)] If $G=id$ is a trivial group, then $Y=\R$.
    \item[ii)] If $G=T_{na}\cong \Z$ is a cyclic group generated by 
    the translation $T_a=\{x \to x+a\}$, $a\in \R$, $n\in \Z$,
    then $Y=\R/G$ is a circle of length $a$. 
    \item[iii)] If $G$ is a reflection $R_b=\{x\to2b-x\}$ around a point $b\in \R$,
    then $Y=\R/ G$ is a half line $[b, +\infty)$. 
    \item [iv)] If $G$ is a combination of a translation $T_a$ and $R_b$, $a<b$, 
    then $Y=\R/ G$ is a segment  $[a, b]$. 
\end{enumerate}
From \cite[Theorem 1.2]{Lange20} follows that
the submetry $P:Y_0 \to Y$ with discrete fibers are orbifold covering (in sense of Thurston). 
It means that $Y_0=\R/ G_0$ where $G_0$ is a subgroup of $G$.
This gives us a complete classification of such submetries $P:Y_0 \to Y$.

\end{proof}

\section{Proof of Theorem 1}\label{sec:sub_R2}

To prove Theorem~\ref{thm:1},
we construct submetries for different cases of the base space $Y$: 
circle $S^1$, real line $\R$, half line $[0, \infty)$ or closed interval $[a,b]$.

From Corollary~\ref{cor:1} it is clear that $Y\neq S^1$. 
The first result holds for the Euclidean space $\R^n$ of any dimension $n\geq 2$.
\begin{theorem}\label{thm:Rn_sub}
    Let $P:\R^n\to Y$ be a submetry with connected leaves between an Euclidean space $\R^n$
    and a one-dimensional space $Y$.
    The following holds up to isometry.
   \begin{enumerate}
       \item[a)] If $Y=\R$, then all fibers $P^{-1}(y)$ are totally geodesic submanifolds $\R^{n-1}$ 
       and $P: \R^{n-1}\times \R \to \R$ is an orthogonal projection;
       \item[b)] If $Y=[0, \infty)$, then 
       $P$ is a distance function to the fiber $P^{-1}(0)$
       and  $P^{-1}(0)$ is a closed convex and nowhere dense subset of $\R^n$.
   \end{enumerate}
\end{theorem}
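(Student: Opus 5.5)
We treat the two cases of Theorem~\ref{thm:Rn_sub} separately. Both rest on Lemma~\ref{lem:dist} (a submetry onto a half line or a line is a (signed) distance function) together with the elementary geometry of $\R^n$.

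\textbf{Case a) $Y=\R$.} By Proposition~\ref{non_mnfd_f}, $\d Y=\emptyset$, so every fiber $L_t=P^{-1}(t)$ is a $C^{1,1}$-submanifold. By Lemma~\ref{lem:tran_region}, all of $\R^n$ is then a transnormal region. Fix $t$ and a point $x\in L_0$, and choose a unit normal $h\in N_xL_0$ pointing toward increasing $P$. The geodesic $\gamma_h$ is horizontal for all times. Since $\R$ has no boundary, $P\circ\gamma_h$ is a horizontal lift image of a geodesic in $Y$ that never turns back, so the line $\gamma_h(s)=x+sh$, $s\in\R$, satisfies $P(x+sh)=s$. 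Hence it is a $P$-minimal geodesic on every subinterval: $|P(p)-P(q)|=|p-q|$ along it.

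Next we use Proposition~\ref{basic}: $d_{L_s}=|P-s|$. Thus $P$ is $1$-Lipschitz and is affine with slope $1$ along the line through $x$ in direction $h$. We want to conclude that $P$ is affine. The cleanest route is Busemann-type: $P(p)=\lim_{s\to\infty}\bigl(s-d_{L_s}(p)\bigr)$, and $d_{L_s}(p)\le |p-(x+sh)|$. This gives $P(p)\ge \lim (s-|p-x-sh|)=\langle p-x,h\rangle$. Applying the same argument with $-h$ gives $P(p)\le\langle p-x,h\rangle$. Hence $P(p)=\langle p-x,h\rangle$, which is an orthogonal projection, and its fibers are hyperplanes. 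Connectedness of the fibers is automatic here.

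\textbf{Case b) $Y=[0,\infty)$.} By Lemma~\ref{lem:dist}, $P=d_L$ with $L=P^{-1}(0)$. $L$ is closed as a fiber. It is nowhere dense: if $L$ contained a ball, then $d_L$ would vanish on it, yet the submetry property $P(B_r(x))=B_r(0)$ at an interior point $x$ of that ball, with $r$ small, would require points at distance $r$ from $L$ inside $B_r(x)$, which is impossible.

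The main step is convexity of $L$. Observe that each level set $\{d_L=r\}$, $r>0$, is a fiber of a submetry with $r$ in the interior of $Y$. By Proposition~\ref{non_mnfd_f} it is therefore a $C^{1,1}$ hypersurface. The same horizontal-geodesic argument as in case a), applied in the transnormal region $\{d_L>0\}$ (Lemma~\ref{lem:tran_region}), shows that every normal ray from a foot point on $L$ is horizontal forever, i.e.\ $d_L(y+sh)=s$ for all $s\ge0$ and all $h\in N_yL$. In other words, $L$ has infinite reach: the nearest-point projection onto $L$ is unique on all of $\R^n$. Uniqueness could fail only when a horizontal geodesic stops minimizing, and that would force $P\circ\gamma$ to turn back at an interior point of $Y$, contradicting Proposition~\ref{quasigeod} together with the fact that quasigeodesics in $[0,\infty)$ can only reflect at $0$. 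By Motzkin's theorem (a closed set in $\R^n$ with unique nearest points everywhere is convex), $L$ is convex. Connectedness of $L$ is consistent with this conclusion.

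The delicate point is exactly this infinite-reach step, namely excluding that a normal ray to $L$ ceases to be minimizing at some finite time. I would handle it via Proposition~\ref{quasigeod}: the image of the ray is a quasigeodesic in $[0,\infty)$ starting at $0$ with initial speed $1$. Such a quasigeodesic must be $s\mapsto s$, because a reflection can occur only at the boundary point $0$. This yields the global horizontality needed.
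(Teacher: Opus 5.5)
Your proof is correct, and for part a) it takes a genuinely different route from the paper.

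\textbf{Part a).} The paper cites Berestovskii--Guijarro: a submetry onto $\R$ is a Riemannian submersion, and their splitting theorem then gives $\R^n=\R^{n-1}\times\R$. You stay within the paper's own preliminaries. Proposition~\ref{non_mnfd_f} and Lemma~\ref{lem:tran_region} make $\R^n$ transnormal. Proposition~\ref{quasigeod} gives a full line with $P(x+sh)=s$. Here you should say explicitly that a quasigeodesic in $\R$ cannot reverse, rather than only that ``$\R$ has no boundary''. You should also invoke $-h\in N_xL_0$ for the negative half of the line. Proposition~\ref{basic} then yields the squeeze $s-|p-x-sh|\le P(p)\le |p-x+sh|-s$, hence $P(p)=\langle p-x,h\rangle$. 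This is self-contained, avoids the submersion and splitting theory, and makes the linearity of $P$ explicit. The line can be obtained even more cheaply, without the transnormal machinery. Concatenate $P$-minimal segments from $x$ to $L_n$ and to $L_{-n}$. This gives a curve of length $2n$ joining sets at distance $2n$, so it is straight. Then let $n\to\infty$.

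\textbf{Part b).} Your strategy is the paper's: infinite reach, then Federer's Theorem 4.8 or Motzkin. However, you actually supply the infinite-reach step. The paper attributes it to Proposition~\ref{fibers_pr}, which by itself only gives positive reach. You also prove nowhere density, which the paper leaves implicit.

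The passage ``every normal ray minimizes forever, i.e.\ infinite reach'' needs the one-line argument ruling out two foot points. If $q_1\neq q_2$ are foot points of $p$, then $p+\varepsilon\frac{p-q_1}{|p-q_1|}$ is at distance strictly less than $r+\varepsilon$ from $q_2$, by the strict triangle inequality.

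There is also a shorter route to infinite reach that bypasses quasigeodesics altogether. Since $P(B_\varepsilon(p))\ni r+\varepsilon$, there is $z$ with $|z-p|\le\varepsilon$ and $d_L(z)=r+\varepsilon$. The chain $r+\varepsilon\le |z-q|\le |z-p|+r$ then forces every foot point $q$ of $p$ to equal $p-r\frac{z-p}{|z-p|}$.

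A minor point: Proposition~\ref{non_mnfd_f} gives $C^{1,1}$ submanifolds, not hypersurfaces. You never use codimension one, so nothing depends on it.
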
 
\begin{proof}
 a) If $Y=\R$, then by Theorem A in \cite{BerGui}, 
 $P:\R^n\to \R$ is a Riemannian submersion. 
 From Theorem B of the same work \cite{BerGui} follows, that 
 there is a totally geodesic hypersurface $\R^{n-1}$  
 such that $\R^n= \R^{n-1}\times \R$ and $P$ can be identified with 
 a projection onto the second factor.
 
 b) By Lemma~\ref{lem:dist}, 
 $P:\R^n\to[0, \infty)$ is a distance function to the fiber $P^{-1}(0)$. 
 By Proposition~\ref{fibers_pr}, the fiber $L=P^{-1}(0)$ is a set of reach $\infty$. 
 By  \cite[Theorem 4.8]{Fed59} $L$ is exactly a closed convex set nowhere dense in $\R^n$. 
\end{proof}

 \begin{remark}\label{rem:R}
     One can also prove b) of Theorem~\ref{thm:Rn_sub} using a) and 
     Theorem~A in \cite{BerGui}. 
     In fact, we can see $P:\R^n\to \R$ as a signed distance function 
     to the fiber $L=P^{-1}(y)$ for some $y\in \R$.
      This is possible since $P$ is a Riemannian submersion  and all fibers are hypersurfaces. 
      Thus, $L$ has reach $\infty$ from both sides, and 
      as a result, $L$ should be convex hypersurface from  both sides.
      Hence,  $L$ is isometric to a flat hypersurface $\R^{n-1}$.      
 \end{remark} 

 \begin{remark}
     When $n=2$, then a closed convex and nowhere dense subset $P^{-1}(0) \subset \R^2$
     is one of the following: a point, a straight line segment, a half line or a straight line.
     The last one can not occur as a fiber $P^{-1}(0)$ of the submetry
     $P:\R^2\to [0, \infty)$, since otherwise all other leaves are not connected. 
 \end{remark}

It remains to construct the submetry for the case  $Y=[a,b]$. 
Now we assume that the dimension of the ambient space is $n=2$. 
Without loss of generality, we can also assume  $Y=[-1,1]$.
Otherwise, we can rescale the Euclidean metric on $\R^n$ by the factor $l/2$,
where $l=b-a$ is the length of the segment. 

Consider the following $C^{1,1}$ curve $\sigma_h$  in $\R^2$. 
Let $\Pi_1$ and $\Pi_{-1}$ be two disjoint closed half-planes at a distance $h \geq 0$
and let  $l_1:=\d \Pi_1$, $l_{-1}:=\d \Pi_{-1}$.
Fix points  $x_0\in l_1$  and $y_0\in l_{-1}$ such that 
$dist(x_0, y_0)=\sqrt{4+h^2}$.
Let $\F_1$ and $\F_{-1}$ be the following families of concentric half-circles 
$$\F_1 = \Pi_1\cap\{S(x_0, r_i)\}_{i=0}^\infty  \,\,\, \textnormal{and} \,\,\,
 \F_{-1} = \Pi_{-1}\cap\{S(y_0, r_i)\}_{i=0}^\infty $$ 
 where $S(x, r)$ is a circle of radius $r$ around $x$ and 
 $r_i=(1+2i)$, $i=0,1,2,\dots$.  
Let $\F_h$ be a family of line segments in
$\R^2\backslash \left(\Pi_1 \cup \Pi_{-1}\right)$ 
orthogonal to $l_{\pm1}$ connecting pairwise the
endpoints $l_1 \cap \F_1$ and $l_{-1} \cap \F_{-1}$.
The choice of  $x_0$, $y_0$ and $r_i$ guarantees that the union
\begin{equation}\label{eq:sigma_h}
    \sigma_h := \F_h \cup \F_1 \cup \F_{-1} 
\end{equation}
is indeed a $C^{1,1}$ curve on $\R^2$ for any $h\geq 0$.

 \begin{theorem}\label{thm:R2_segm}
      Let $P:\R^2\to[-1, 1]$ be a submetry with connected fibers.
    Then, up to isometry, 
    the map $P$ is the signed distance function to the curve $P^{-1}(0)=\sigma_{h}$
    defined in (\ref{eq:sigma_h})  for some $h\geq 0$.
 \end{theorem}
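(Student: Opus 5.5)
We will first pin down the structure of the fibers. By Proposition~\ref{non_mnfd_f}, every fiber $L_t=P^{-1}(t)$ with $t\in(-1,1)$ is a $C^{1,1}$-submanifold. Since it separates $P^{-1}([-1,t))$ from $P^{-1}((t,1])$, it must be a connected curve without boundary, i.e.\ a properly embedded line in $\R^2$; it cannot be a closed curve, because otherwise one side would be bounded and one of $L_{\pm1}$ would fail to be connected or unbounded appropriately. By Lemma~\ref{lem:dist}, $P$ is the signed distance to $\sigma:=L_0$, and $P=d_{L_{-1}}-1=1-d_{L_1}$. By Lemma~\ref{Lem:1dim_pr}, each of $L_{\pm1}$ is a point or a $C^{1,1}$ curve with or without boundary. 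The region $\V=P^{-1}((-1,1))$ is a transnormal region by Lemma~\ref{lem:tran_region}. Consequently every normal segment of $\sigma$ of length $1$ on either side is a $P$-minimal geodesic ending on $L_{\pm1}$, and these segments foliate $\V$ without crossing. It follows that the foot-point map $\sigma\to L_1$ along unit normals is well defined and $1$-Lipschitz, and that the curvature of $\sigma$ satisfies $|\kappa|\le 1$ almost everywhere.

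Next we analyse $L_1$, and symmetrically $L_{-1}$. Since $L_1$ has positive reach and every point of it is at distance exactly $2$ from $L_{-1}$, its tangent cones are convex. Where $L_1$ is a regular arc, the parallel curve $\sigma$ at distance $1$ is smooth, so $L_1$ cannot have curvature pointing away from $\sigma$. The claim to establish is that on $L_1$ the preimage of each point under the foot-point map is either a single point or an arc of $\sigma$ that is a half-circle of radius $1$ centred there. The argument is as follows. At a point $p\in L_1$, the normal cone $N_pL_1$ is a convex cone. If it has positive angle, the normal geodesics from $p$ in the interior directions sweep a circular arc of radius $1$ in $\sigma$ and radius $2$ in $L_{-1}$. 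On the other hand, since $L_1$ is connected, being part of a line-like curve forces $L_1$ to be either a point or a segment/ray with endpoints. Interior regular points with straight neighbourhoods then give straight pieces of $\sigma$, using that $d_{L_1}$ and $d_{L_{-1}}$ must be compatible.

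The core step, and the one we expect to be the main obstacle, is to show that $L_1$ is a straight line $l_1$ (or a point or segment) and that $\sigma$ consists of straight pieces together with unit half-circles. The plan is to use equidistance of $L_1$ and $L_{-1}$ at constant distance $2$, with each a set of positive reach and of empty interior. Every point $q\in L_{-1}$ has its nearest points on $L_1$ at distance $2$, and vice versa. A regular curved arc of $L_1$ would produce, at distance $2$, an arc of $L_{-1}$ whose own parallel at distance $2$ back towards $L_1$ must reproduce $L_1$. This is consistent only when both arcs are straight, or when one degenerates to a point, giving concentric circles of radii $2i$. We would first try to show that $L_{\pm1}$ are straight: parametrise $\sigma$ by arclength, let $\kappa$ be its curvature, and note that the curvature of the parallel curves $\kappa/(1\mp\kappa)$ must be admissible on both sides. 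Since $L_{\pm1}$ can only bend towards $\sigma$ (reach condition from the far side, as $P$ attains its extremes there), we get $\kappa\ge0$ and $\kappa\le0$ wherever both parallels are regular, so $\kappa=0$ there. Otherwise $|\kappa|=1$, where one parallel collapses to a point. Hence $\sigma$ is a concatenation of straight segments and unit half-circles whose centres are singular points of $L_{\pm1}$.

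Finally we assemble the pieces. Connectedness of $L_{\pm1}$ and the fact that the centres lie on them force $L_1$ to lie on a line $l_1$, with the half-circles on its side centred at a common point $x_0\in l_1$. These circles have radii $1,3,5,\dots$, obtained by iterating: a circle of radius $r$ around $x_0$ on the $\sigma$ side yields, through the $L_{-1}$ side, the next one. The straight pieces of $\sigma$ are perpendicular segments of common length $h$ joining $l_1$ to a parallel line $l_{-1}$ containing $L_{-1}$. Matching of endpoints forces the offset $\sqrt{4+h^2}$ between $x_0$ and $y_0$. Conversely, the signed distance to $\sigma_h$ is a submetry by direct check. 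This identifies $\sigma=\sigma_h$ up to isometry.
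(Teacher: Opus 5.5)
There is a genuine gap, and it sits in the step you yourself call the core one. The sign argument (``$L_{\pm1}$ can only bend towards $\sigma$, so $\kappa\ge0$ and $\kappa\le0$, hence $\kappa=0$ wherever both parallels are regular'') has no basis. $L_1$ has empty interior, and $P=1-d_{L_1}$ holds on both sides of a regular arc of $L_1$, so $L_0$ lies at distance $1$ on \emph{both} sides of such an arc. There is no ``far side'', and positive reach bounds $|\kappa|$ but not its sign. The target curve itself refutes your conclusion. The half-circle of radius $3$ about $x_0$ in $\sigma_h$ has curvature $1/3$, yet both of its parallels, the half-circles of radii $2$ and $4$, are regular arcs of the two boundary fibers. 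The same arc shows that the unit-normal foot-point map from $\sigma$ to a boundary fiber is not $1$-Lipschitz: it stretches this arc by $4/3$. Your intermediate conclusion ``straight pieces plus \emph{unit} half-circles'' is also inconsistent with the radii $1,3,5,\dots$ that you simply assert in the last paragraph. Your picture of the boundary fibers is wrong as well. Since $d_{L_1}=1-P\le 2$ on all of $\R^2$, the set $L_1$ is $2$-dense in the plane, so it cannot be a point, a segment, or a subset of a line $l_1$. In the actual solution, let $\{p_1,p_{-1}\}=\{x_0,y_0\}$. Then $L_1$ is a curved half-line issuing from its unique endpoint $p_1$. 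It winds through half-circles of radii $4,8,\dots$ about $p_1$ and $2,6,\dots$ about $p_{-1}$, joined by segments across the strip when $h>0$.

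What is missing is the global mechanism that produces the rigidity. Transnormality of $P^{-1}((-1,1))$ only controls normal segments of length $1$, and local curvature considerations along them give essentially only $|\kappa|\le 1$. The paper proceeds as follows.

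First, it shows that each $L_{\pm1}$ is homeomorphic to $[0,\infty)$ with a single endpoint $p_{\pm1}$. Compact $L_{\pm1}$ is excluded as above, and a properly embedded line would separate the plane and disconnect $L_0=\{d_{L_1}=1\}$.

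Second, Lemma~\ref{lem:tran_region} then makes all of $\R^2\setminus\{p_1,p_{-1}\}$ transnormal. So every normal line of $\sigma$ passes straight through regular points of $L_{\pm1}$, and $P$ zigzags along it with period $4$.

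Third, the normal lines split into two kinds. Normal lines that never meet $p_{\pm1}$ cut $L_0$ in pieces of infinite reach from both sides, hence in straight segments, and they fill a strip of parallel lines. Normal lines through $p_{\pm1}$ are the rays filling the half-plane $\Pi_{\pm1}=\exp(N_{p_{\pm1}}L_{\pm1})$. The zigzag shows that $L_0\cap\Pi_{\pm1}$ consists of the half-circles of radii $1,3,5,\dots$ about $p_{\pm1}$.

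Finally, disjointness of the interiors of $\Pi_1$ and $\Pi_{-1}$ forces $\partial\Pi_1\parallel\partial\Pi_{-1}$ (or equality), and connectedness of $L_0$ forces $|p_1p_{-1}|=\sqrt{4+h^2}$.

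Your curvature-sign step has to be replaced by this chain: the endpoint structure of $L_{\pm1}$, transnormality across the regular parts of $L_{\pm1}$, and the parallel-versus-radial dichotomy for normal lines.
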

\begin{proof}
By Lemma~\ref{lem:dist} the function $P$ can be identify with the signed distance to the 
middle fiber $L_0=P^{-1}(0)$.
By Propositions~\ref{non_mnfd_f}, $L_0$ is a $C^{1,1}$ curve without boundary in $\R^2$.
It means, that in each point $x \in L_0$ the tangent space $T_xL_0$ is 
a one-dimensional Euclidean space.
The normal space $N_xL_0$ then is also then a one-dimensional vector space, 
orthogonal to $T_xL_0$.

To construct $L_0$ we first want to understand the structure of the boundary fibers 
$L_{1}=P^{-1}(1)$ and $L_{-1}=P^{-1}(-1)$.

We claim  that the sets $L_{1}$ and $L_{-1}$
each is a $C^{1,1}$ curve with a single boundary point. 
In fact, by Lemma~\ref{Lem:1dim_pr}, $L_{\pm1}$ is either a point or 
a $C^{1,1}$ curve with or without boundary.
Consider  the sets $D_{\pm1}$ of all 
the points in $\R^2$ at a distance $\leq 1$ from $L_{\pm1}$.
These two closed sets $D_1$ and $D_{-1}$, when glued along the boundary,
cover the entire $\R^2$.
If $L_{1}$ were a point or $L_{1}$ had two boundary points, then 
$D_1$ would be a compact domain, and $L_0$ would be a closed curve. 
Consequently, $D_{-1}$ would also be a compact domain, 
which contradicts $D_1 \cup D_{-1} = \R^2$. 
If $L_{1}$ were homeomorphic to $\R$, 
the boundary of the domain $D_1$ would split in two components. 
This contradicts the fact that all fibers are connected. 
Therefore, the only possibility is that
$L_{1}$ and $L_{-1}$ are $C^{1,1}$-equivalent to half-lines
with the boundary points $p_1$ and $p_{-1}$ respectively.

By Lemma~\ref{lem:tran_region}, 
the open set $\V:=\R^2\backslash\{p_1, p_{-1}\}$ is
a transnormal region of the ambient space $\R^2$. 

We will denote by $\gamma(t):[0, \infty) \to \R^2$ the geodesic emanating from $x\in L_0$
in the direction $h_x\in N_xL$, i.e.$\gamma(t)=x+t\,h_x$.
Since $-h_x\in N_xL$, then the geodesic in the opposite direction 
can be written as $\gamma(-t)=x-t\,h_x$, $t\geq 0$.
The geodesics $\gamma(t)$ and $\gamma(-t)$ are horizontal inside $\V$.

The projection $(P\circ\gamma)(t)$ is a quasigeodesic in $Y$ for all times 
of its existence $t\in (0, t_0)$.
Moreover, by Proposition~\ref{quasigeod}, there is a partition of $(0, t_0)$  such that
\begin{equation}\label{eq:quasigeod_R2}
    (P\circ\gamma)\left(2k+(-1)^k\e \right) =\e, \,\,\, \e \in [-1, 1]
\end{equation}
when $k\in \N$ such that $2k+(-1)^k\e \leq t_0$.
The same holds in the opposite direction $-h_x$, taking $-t$ and $-\e$ instead. \\
\par

\textbf{Step 1:}\textit{ Infinitely transnormal regions}.\par
We call a connected subset $\H \subset \V $ \textit{an infinitely transnormal region} 
if for any point $x\in \H$ the horizontal geodesic 
$\gamma(t)$ lies inside $\H$ for all $t\in \R$, 
i.e. $\gamma(t)$ never intersect the point $p_1$ or $p_{-1}$ in both directions.
If $\H \neq \emptyset$, denote 
\begin{equation}\label{eq:Fh}
    \F_{h} := L_0\cap \H.
\end{equation}
From the equation~(\ref{eq:quasigeod_R2}), 
$\F_h$ is an infinite family of segments of $L_0$,
any two of which are at distance $2$ from each other. 
Each of these segments is the set of reach $\infty$ inside $\H\subset \R^2$.
Since it is true for both directions of $\gamma(t)$,
from \cite[Theorem 4.8]{Fed59} we can deduce that $\F_h$ is a family of straight line segments. 
Moreover, intersection of any fiber $L_y$, $y\in [-1,1]$, with $\H$ is an infinite family of 
straight line segments. 

The geodesics $\gamma(t)$ are then parallel straight lines in $\R^2$
and the region $\H$ is an infinite strip of length $h\geq 0$ in $\R^2$.

If there are several infinitely transnormal region $\H_i$, $i=1,2 \dots$,
then they are disjoint infinite strips enclosed by the parallel straight lines 
and the intersection of the fiber $L_0$  with any $\H_i$
is a family of straight line segments (see Figure~\ref{fig:inf_trans}). \\
\par

\begin{figure}[h]
\begin{subfigure}{0.45\textwidth}
\includegraphics[width=0.9\linewidth]{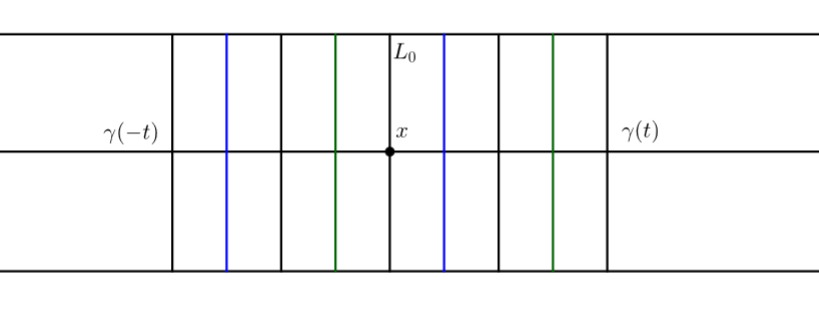}
\caption{Foliation of $\H$.}
\label{fig:inf_trans}
\end{subfigure}
\begin{subfigure}{0.45\textwidth}
\includegraphics[width=0.9\linewidth]{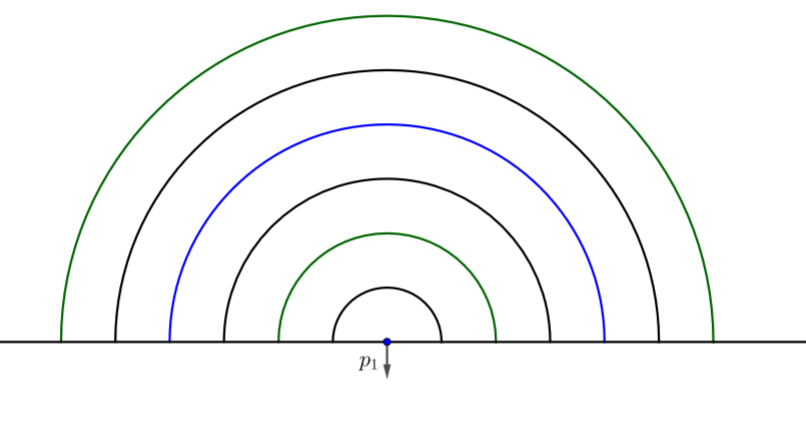}
\caption{Foliation of $\Pi_1$. }
\label{fig:Pi_1}
\end{subfigure}
\caption{Fibers: $L_0$ (black curve), $L_1$ (blue curve), and $L_{-1}$ (green curve) }
\label{fig:image2}
\end{figure}

\textbf{Step 2:} \textit{Remaining regions}. \par
For any point $x\in \V \backslash \H$ there is 
 a time $t_x\in \R$ such that the horizontal geodesic 
 $\gamma(t)$ lies inside $ \V \backslash \H$ when $0\leq t < t_x$ 
 and $\gamma(t_x) = p_1$ or~$p_{-1}$.

To construct $\V \backslash \H$ we first 
analyze the structure of the boundary fibers $L_{1}$ and $L_{-1}$ around $p_{1}$ 
and $p_{-1}$ respectively. 
The tangent space $T_{p_1}L_1$ is a Euclidean half-line, 
meaning that $T_{p_1}L_1$ contains only one unit tangent vector~$\tau_1$. 
We recall that the normal space $N_{p_1}L_1$ is the set of all vectors in $T_{p_1}\R^2$ 
enclosing angles $\geq \pi/2$ with vectors in $T_{p_1}L_1$. 
In our case, $N_{p_1}L_1$ is a two-dimensional half-plane. 
Under the exponential map of $T_{p_1}\R^2$ to $\R^2$, the 
subset $N_{p_1}L_1$ corresponds to the closed half-plane $\Pi_1$.
Denote $l_1:=\d\Pi_1$.

Since the fiber $L_0$ lies at distance $1$ from $L_1$,
then  in $\Pi_1$, $L_0$ forms a half-circle $s(p_1,1)$
of radius $1$ centered at $p_1$.
For any point $x\in s(p_1,1)$, 
the horizontal geodesic $\gamma(t)$ satisfies the following:
\begin{itemize}
    \item $\gamma(1) = p_1$;
    \item $\gamma(t) \subset \V \backslash \H$, for $t\in [0, 1)$.
\end{itemize}
We want to show now  that in the opposite direction 
$\gamma(t) \in \V \backslash \H$ for all $t\leq 0$ and  $x\in s(p_1,1)$. 
Once established, it would imply that $\Pi_1\backslash p_1 \subset \V \backslash \H$.

From the equation~(\ref{eq:quasigeod_R2}) we can deduce that inside $\Pi_1$,
$\gamma(-1-4k) \in L_{-1}$   for any $k\in \N\cup\{0\}$ and any $x\in s(p_1,1)$.
Since $s(p_1,1)$ is a half-circle, then for any fixed $k\in \N$
the points $\gamma(-1-4k)$ form a half-circle $s(p_1,2+4k)$ of  radius $2+4k$ and center $p_1$. 
Therefore, the half-plane $\Pi_1$ does not contain the singular point $p_{-1}$
and hence $\Pi_1\backslash \{p_1\} \subset \V \backslash \H$.

From~(\ref{eq:quasigeod_R2}), we can also deduce that 
the fiber $L_0$ inside $\Pi_1$ forms the family of half-circles  
\begin{equation}\label{eq:F1}
    \F_{1} = L_0\cap \Pi_{1}=  \{s(p_{1},1+2k)\}_{k=0}^\infty 
\end{equation}
where $s(p_{1},1+2k)$ is a half-circle of the radius $1+2k$ around $p_{1}$
(see Figure~\ref{fig:Pi_1}).   

In the same way, the point $p_{-1}$ of $L_{-1}$ defines 
the closed half-plane $\Pi_{-1}$, 
where $l_{-1}:=\d\Pi_{-1}$ and $l_{-1}$ is orthogonal to the 
tangent vector $\tau_{-1}$ to $L_{-1}$ at $p_{-1}$.

Since the fiber $L_0$ lies at the distance $1$ from $L_{-1}$,
 there is  a half-circle $s(p_{-1},1)$
of radius $1$ with the center at $p_{-1}$ and $ s(p_{-1},1) \subset L_0\cap \Pi_{-1}$. 
Then, similarly to the above, 
from the equation~(\ref{eq:quasigeod_R2}) we can deduce,
that intersection $L_1\cap \Pi_{-1}$ is a family of 
half-circles around $p_{-1}$ with the radius $2+4k$, $k\in \N\cup\{0\}$.
This in particular means that $\Pi_{-1}$ does not contain $p_1$ and 
$\Pi_{-1} \backslash\{p_{-1}\}\subset \V \backslash \H$.

The intersection $L_0 \cap \Pi_{-1}$ is a family of disjoint half-circles  
\begin{equation}\label{eq:F-1}
    \F_{-1} = L_0\cap \Pi_{-1}=  \{s(p_{-1},1+2k)\}_{k=0}^\infty.
\end{equation}

\textbf{Step 3:} \textit{Gluing}. \par
Summarizing what we have shown so far:
the transnormal region $\V=\R^2\backslash\{p_1, p_{-1}\}$ is a union 
of  two half-planes $\Pi_1\backslash \{p_1\}$ and $\Pi_{-1}\backslash \{p_{-1}\}$
and infinite transnormal regions $\H_1, \dots, \H_s$. 

The interiors of $\Pi_1$ and $\Pi_{-1}$ are disjoint.
It follows from the fact that $p_{-1}\notin \Pi_1 \backslash l_1$ and 
$p_1\notin \Pi_{-1}\backslash l_{-1}$
and the half-circles of the family $\F_{1}$ and $\F_{-1}$
  are orthogonal to the boundary lines $l_1$ and $l_{-1}$ respectively. 

If $l:=l_1 =l_{-1}$, then there is no room for the regions $\H_1, \dots, \H_s$. 
The line $l$ contains both $p_{1}$ and $p_{-1}$ and divides $\R^2$
into two parts $\Pi_1$ and $\Pi_{-1}$.
The union $\F_{1} \cup  \F_{-1}$ forms a connected  fiber $L_0$
if and only if $dist(p_1, p_{-1})=2$. 
Then $L_0=\F_{1} \cup  \F_{-1}$ is the curve $\sigma_h$
from the definition~(\ref{eq:sigma_h}) when $h=0$.

When the lines $l_1$ and $l_{-1}$ are parallel, then 
there is room for only one infinite transnormal region
$\H=\R^2\backslash \left( \Pi_1 \cup \Pi_{-1} \right)$.
Recall that $\H$ is a stripe of length $h:=dist(l_1, l_{-1})$.
From (\ref{eq:Fh}), (\ref{eq:F1}) and (\ref{eq:F-1}) follows that
the union $\F_h \cup\F_{1} \cup  \F_{-1}$ forms a connected fiber $L_0$ 
if and only if $dist(p_1, p_{-1})=\sqrt{4+h^2}$.
Thus, $L_0$ is exactly a curve $\sigma_h$ 
from~(\ref{eq:sigma_h}) where $\{x_0, y_0\}=\{p_1, p_{-1}\}$ and $h>0$.

In both cases, by construction, the map $P$ is a signed distance to $L_0=\sigma_h$ 
(see Figure~\ref{fig:R2_fibers}). 

\begin{figure}[h]
    \centering
    \includegraphics[width=0.5\textwidth]{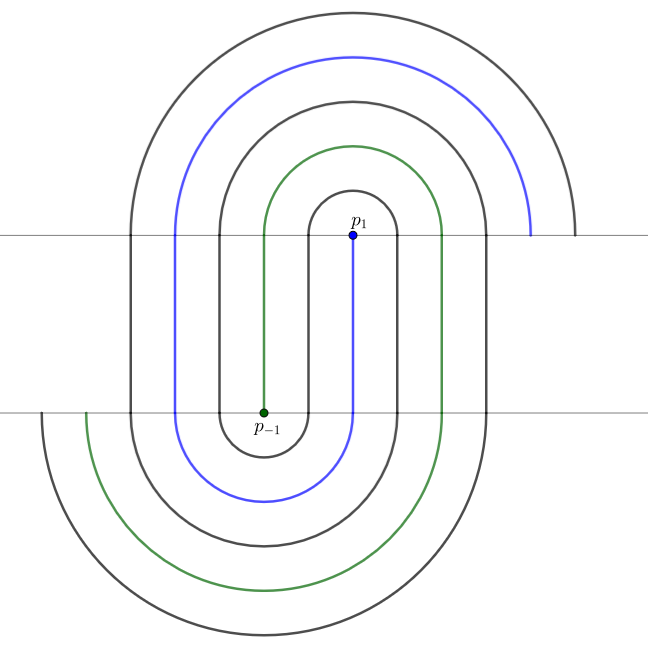}
    \caption{Decomposition of $\R^2$ into fibers: 
         $L_0$ (black curve), $L_1$ (blue curve), and $L_{-1}$ (green curve).}
    \label{fig:R2_fibers}
\end{figure}

\end{proof}


  \section{Proof of Theorem 2}\label{sec:sub_S2}

We will first remind the conditions of the theorem. 
Consider a two-dimensional unit sphere $\Sph^2$
and let $\Sph^2_1$ and $\Sph^2_{-1}$ be two closed hemispheres on $\Sph^2$
intersecting only along their boundary - the great circle $S^1$.
Let $a:=\pi/2k$ for $k>1$ and let $s$ be a number coprime with $k$.
Fix points $x_0, y_0 \in S_1$ at distance $2a\cdot s$ between each other. 
Consider two families of concentric half-circles 
$$\F_1 = \Sph^2_1\cap\{S(x_0, r_i)\}_{i=0}^{k-1}  \,\,\, \textnormal{and} \,\,\,
 \F_{-1} = \Sph^2_{-1}\cap\{S(y_0, r_i)\}_{i=0}^{k-1} $$ 
 where $S(x, r)$ are the points at distance $r$ from $x$ on $\Sph^2$ and 
 $r_i=(1+2i)\cdot a$ for any $i=0,1,..., k-1$.  
By choice of $x_0$ and $y_0$, the union 
\begin{equation}\label{eq:sigma_s}
    \sigma_{s}=\F_1 \cup \F_{-1} 
\end{equation}
forms a connected $C^{1,1}$ curve on $\Sph^2$.
\begin{theorem}\label{thm:S2_sub}
Let $P:\Sph^2\to Y$ be a submetry with connected leaves between
 a two-dimensional unite sphere~$\Sph^2$ and a one-dimensional Alexandrov space~$Y$.\\
Then $Y$ is a segment of  length $\pi/k$ and 
      \begin{enumerate}
          \item[a)]  If $k=1$, 
                then $P$ is a quotient map of an isometric group action of $S^1$ on $\Sph^2$.
          \item[b)] If $k>1$, then, up to isometry,
                $P$ is the signed distance function to the curve~$P^{-1}(0)=\sigma_{s}$ 
                for some $s$ coprime with $k$.
      \end{enumerate}
\end{theorem}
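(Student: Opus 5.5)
First I will pin down the base. $Y$ is compact because it is the image of the compact $\Sph^2$. By Corollary~\ref{cor:1} it cannot be a circle, so $Y$ is a segment, and after translation $Y=[-\ell/2,\ell/2]$. By Lemma~\ref{lem:dist}, $P$ is the signed distance to the middle fiber $L_0=P^{-1}(0)$. The middle fiber is an interior point of $Y$, so by Proposition~\ref{non_mnfd_f} it is a $C^{1,1}$ submanifold; since it is connected it is a closed $C^{1,1}$ curve. The boundary fibers $L_{\pm}$ are, by Lemma~\ref{Lem:1dim_pr}, either points or $C^{1,1}$ curves with or without boundary. As in the proof of Theorem~\ref{thm:R2_segm}, the sets $D_\pm$ of points at distance $\le \ell/2$ from $L_\pm$ are glued along $L_0$ and cover $\Sph^2$.

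Next I will split into cases according to the type of $L_+$.

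\emph{$L_+$ is a closed curve without boundary.} Then the complement of $L_+$ has two components. The set $\partial D_+ = L_0$ would then be disconnected, since the two sides of $L_+$ are separated; this contradicts connectedness of $L_0$.

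\emph{$L_+$ is a point.} Then $L_0$ is a metric circle around that point and $D_+$ is a disk. Hence $D_-$ is also a disk, and its boundary $L_0$ is a circle of constant curvature. Reach considerations force $L_-$ to be the antipodal point, so $\ell=\pi$. In this case $P$ is the distance to a point, i.e.\ the quotient by the rotation group $S^1$. This gives a) with $k=1$.

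\emph{$L_+$ is an arc with two endpoints.} This case remains, and by symmetry the same holds for $L_-$ unless $L_-$ is a point. In this case the transnormal region is $\V=\Sph^2\setminus\{\text{endpoints}\}$ by Lemma~\ref{lem:tran_region}.

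In the arc case I mimic Step~2 of the Euclidean proof. At an endpoint $p$ of $L_+$ the normal cone is a half-plane, which exponentiates to a closed hemisphere. Along each horizontal geodesic, Proposition~\ref{quasigeod} shows that $P\circ\gamma$ bounces between $\pm\ell/2$ with period $2\ell$. All great circles through $p$ are horizontal, so in the hemisphere the fibers are concentric circles $S(p,r)$, where $r\equiv \ell/2 \pmod{2\ell}$ for $L_0$ and so on. These circles must close up consistently on the opposite endpoint $-p$, which lies at distance $\pi$. So $-p$ is the centre of a hemisphere of the same kind and must be an endpoint of $L_+$ or $L_-$; this forces $\pi$ to be an integer multiple of $\ell$. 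Write $\ell=\pi/k$. If $k=1$, the endpoint arc has length $0$ and degenerates, so it falls under the point case. So for $k>1$ the endpoints of $L_+$ come in an antipodal pair, and likewise for $L_-$.

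I expect the main obstacle to be the global step: showing that the sphere is exactly the union of two hemispheres $\Sph^2_{\pm1}$, with no leftover "infinitely transnormal" region. On $\Sph^2$ a leftover region would be a lune swept by great circles avoiding all four singular points. I would argue that any lune foliated by parallel geodesic segments in the Euclidean sense is impossible, because great circles through a common direction meet. So the hemisphere at the endpoint $x_0$ of $L_+$ and the hemisphere at the endpoint $y_0$ of $L_-$ must share the boundary great circle $S^1$. Then $x_0$ and $y_0$ both lie on $S^1$. Finally, the half-circles $\F_1,\F_{-1}$ of radii $(1+2i)a$, with $a=\pi/2k$, meet $S^1$ at points $x_0+(1+2i)a$ and $y_0\pm(1+2j)a$. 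Requiring that their union is a single connected closed curve forces $d(x_0,y_0)=2as$. Connectedness holds exactly when $s$ generates $\Z/k$, i.e.\ $\gcd(s,k)=1$; for $s$ not coprime the union splits into several closed components. This yields b) with $L_0=\sigma_s$.
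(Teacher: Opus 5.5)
There is a genuine gap in the global step: you flag it as the main obstacle, but you never carry it out. Your reduction to a segment, the signed distance to $L_0$, the exclusion of closed boundary fibers, the point case giving $\ell=\pi$, and the final count on $S^1$ (connected iff $\gcd(s,k)=1$) are fine and match the paper.

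The ``lune foliated by parallel geodesics'' heuristic is carried over from the strip $\H$ of the Euclidean proof, and it misses what can go wrong on $\Sph^2$. The danger is two endpoint hemispheres whose boundary great circles cross. Their interiors then overlap and carry two incompatible concentric foliations, and a lune is left uncovered. Also, ``great circles through a common direction meet'' gives no contradiction by itself. Horizontal geodesics are allowed to meet at points of $B$, and they do so at every hemisphere centre.

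The missing idea is the paper's antipodality argument. First you need that the open hemisphere $\Sph^2_+$ at an endpoint $p_+$ contains no point of $B$. You use this implicitly when you declare all great circles through $p$ horizontal, but you never prove it. It holds because generic radial geodesics avoid the finite set $B$, so the fibers in $\Sph^2_+$ contain full concentric semicircles. A point of $B$ in the interior would then be an interior point of such an arc, not an endpoint.

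The boundary great circle $S^1_+$ contains the pair $p_+,-p_+$, both in $B$. If $S^1_+\neq S^1_-$, the two circles cross in a single antipodal pair. Any other antipodal pair on $S^1_-$ has exactly one point in the open hemisphere $\Sph^2_+$. Hence $\{p_-,-p_-\}$ must be the crossing pair, and symmetrically so must $\{p_+,-p_+\}$. This forces $p_-=-p_+$. So $S^1_+=S^1_-$ as soon as $p_-$ is chosen not antipodal to $p_+$. The hemispheres are then complementary, because equal hemispheres would carry concentric foliations about two centres $p_+\neq\pm p_-$.

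That choice of $p_-$ is where your claim that for $k>1$ the endpoints of $L_+$ form an antipodal pair breaks. Along a radial geodesic from an endpoint $p$ of $L_+$, $P$ equals $(-1)^m\ell/2$ at distance $m\ell$. Hence $P(-p)=(-1)^k\ell/2$. For odd $k$ the antipode of an endpoint of $L_+$ is an endpoint of $L_-$. For example, in $\sigma_1$ with $k=3$ the two endpoints of each boundary fiber are at distance $2\pi/3$.

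Moreover, the normal hemisphere of $L_-$ at $-x_0$ coincides with that of $L_+$ at $x_0$, since the horizontal geodesics from $x_0$ reach $-x_0$ from inside it. So if $y_0=-x_0$, ``sharing the boundary great circle'' holds trivially, yet the hemispheres are not complementary and the gluing fails. You must take $y_0$ to be the other endpoint of $L_-$, as the paper does by relabelling.

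Three smaller points:
\begin{itemize}
\item Your reason for $-p\in B$ (``the circles must close up consistently'') should be the paper's: a whole half-plane of horizontal directions converges at $-p$, which is impossible at a regular point where the normal space is a line.
\item The $L_0$-radii are $\ell/2 \bmod \ell$, not $\bmod 2\ell$.
\item Arcs with $k=1$ are excluded only after the global step. Matching endpoints on $S^1$ would then force $y_0=-x_0$. You only assert this exclusion.
\end{itemize}
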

\begin{proof}
Since $\Sph^2$ is compact, the base space $Y$ must also be compact 
(see \cite[Lemma 2.3.]{KL20} and \cite[Proposition 1]{Ber87}).
Moreover, from Lemma~\ref{lem:smpl_con} we also know that $Y$ is simply connected. 
Therefore, $Y$ must be a segment $[-a, a]$.

By Lemma~\ref{lem:dist}, $P$ can be identified, up to isometry,
with the signed distance function to the middle fiber $L_0=P^{-1}(0)$.
By Lemma~\ref{Lem:1dim_pr}, 
$L_0$ is a $C^{1,1}$ curve without boundary.

To construct $L_0$, we consider the geodesic $\gamma(t):[0, t_0) \to \Sph^2$ 
emanating from $x\in L_0$ in the direction $h_x\in N_xL$.
We will write the geodesic in the opposite direction $-h_x\in N_xL$ 
 as $\gamma(-t)$, $t\geq 0$.
Note that for any horizontal geodesic the maximal time of its existence satisfies
\begin{equation}\label{eq:geod_time}
|t_0|\leq \pi.
\end{equation}

The projection $(P\circ\gamma)(t):[0, t_0) \to Y$ is a quasigeodesic in $Y$.
If $t_0>a$, then  by Proposition~\ref{quasigeod}, there is a partition of $[0, t_0)$  such that
\begin{equation}\label{eq:quasigeod_S2}
    (P\circ\gamma)\left(2ka+(-1)^k\e \right) =\e, \,\,\, \e \in [-a, a]
\end{equation}
when $k\in \N$ is such that $2ka+(-1)^k\e \leq t_0$.
In the opposite direction, the same holds replacing $t$ with $-t$ and $\e$ with $-\e$. 
In combination with (\ref{eq:geod_time}), it implies that 
\begin{equation}\label{eq:a_pi}
    a\leq \pi.
\end{equation}

Before proceeding with the construction of $L_0$, we first analyze the boundary fibers
$L_{+}=P^{-1}( a)$ and $L_{-}=P^{-1}(-a)$.
By Lemma \ref{Lem:1dim_pr} each of these fibers is either a point
or a  $C^{1,1}$ curve that may have boundary.

First, we observe that $L_{+}$ and $L_{-}$ must be compact.
Indeed, let $D_{\pm }$ be the set of all the points in $\Sph^2$ 
at distance $\leq a$ from $L_{\pm}$.
Then $\Sph^2 = D_{+} \cup D_-$ and
since $\Sph^2$ is compact, both $D_{+}$ and $D_-$ must be compact domains. 
Furthermore, neither $L_+$ nor $L_-$ can be a closed curve;
otherwise the boundary of $D_{\pm }$ splits into two components.
Since the boundary of $D_{\pm }$ is the fiber $L_0$, 
this contradicts its connectedness.
Therefore, each boundary fiber, $L_+$ and $L_-$,
is either a single point or a closed segment.\\
\par

\textit{Case 1.}
Suppose the boundary fiber $L_+$ is a single point $p_+$. 
Then the fiber $L_0$ is the set of points at distance $a$ from  $p_+$,
i.e.  $L_0$ is a circle of radius $a$ on the sphere $\Sph^2$. 
The points of the fiber $L_-$ lie at distance $a$ from $L_0$ 
in the direction  opposite from $L_+$ direction. 
Therefore, $a\leq \pi/2$.
If $a< \pi/2$, then $L_-$ is a circle of  radius $2a$ centered at $p_-$.
Since all fibers are connected, there are no other fibers on the sphere,
a contradiction. 
Thus, $2a = \pi$ and $L_-$ is also a single point $p_-$, antipodal to $p_+$.
Hence, $P$ is a quotient map of an isometric $S^1$-action on $\Sph^2$. 
This proves case a). \\
\par

\textit{Case 2.}
Suppose now the boundary fibers are closed segments:
$L_+ =[p_+, q_+]$ and $L_- = [p_-, q_-]$. 
Denote  
$$B:=\{p_+, p_-, q_+, q_-\} \,\,\, \textnormal{and} \,\,\, \V:=\Sph^2\backslash B.$$
By Lemma~\ref{lem:tran_region}, the open subset $\V$ is a transnormal region of $\Sph^2$.

Take one of the singular points, for example $p_+$.
The tangent space $T_{p_+}L_+$ contains only one unit tangent vector $\tau_+$. 
The normal space $N_{p_+}L_+$ is then a two-dimensional half-plane. 
The image of  $N_{p_+}L_+$ under the exponential map is a closed hemisphere 
$\Sph^2_+ \subset  \Sph^2$  bounded by the great circle~$S_+^1$.

The points of the fiber $L_0$ lie at distance $a$ from $p_+$, i.e.
they form a semicircle $s(p_+, a)$ on $\Sph^2_+$.
The geodesic $\gamma(t)$ starting at the point $x\in s(p_+, a)$ 
in the horizontal direction $h_x\in N_{x}L_0$ satisfies the following:
\begin{itemize}
    \item $\gamma(a) = p_+$;
    \item $\gamma(t) \subset \V $, for $t\in [0, a)$.
\end{itemize}

Moving in the opposite direction $-h_x\in N_{x}L_0$ along the horizontal geodesic,
from the equation~(\ref{eq:quasigeod_S2}) we can deduce, the following:
\begin{equation}\label{eq:fibersSph+}
    \gamma\left(-a-4ka \right) \in L_{-}; \,\,\,\,\,\, 
    \gamma\left(-2ak \right) \in L_{0}; \,\,\,\,\,\, 
   \gamma\left(-3a-4ka \right) \in L_{+}.    
\end{equation}
From another side, since  $\gamma(t)$ is on a sphere,
all geodesics starting at the point $x\in s(p_+, a)$ come together at the point $b=\gamma(a-\pi)$.
This means that $b$ lies outside of the transnormal region, and thus $b\in B$. 
From (\ref{eq:fibersSph+}) then follows that
\begin{itemize}
      \item there exists $k_0\in \N$ such that  $a=\pi/2k_0$ and
      \item  if $2k_0-1 \equiv 3 \pmod 4$, then $b = q_+$.
      \item  if $2k_0-1 \equiv 1 \pmod 4$, then $b = q_-$ (if $b=p_-$, we can swap the notations).
  \end{itemize}

We claim that the open hemisphere $\Sph^2_+ \backslash S_+^1$ is a transnormal region and 
does not have any singular points inside.
In fact, from (\ref{eq:fibersSph+}) we get that since  $x\in s(p_+, a)$ then for 
$k<k_0$ the points $\gamma\left(-a-4ka \right)$ form a semicircle 
$s\big(p_+, r_k\big)$  of the radius $r_k=(2+4k)\cdot a$ centered at $p_+$ in $\Sph^2_+$.
The points $\gamma\left(-3a-4ka \right)$ form a semicircle 
$s(p_+, r_k)$  of  radius $r_k=4(k+1)\cdot a$ in $\Sph^2_+$ (see Figure~\ref{fig:halfS2_fibers}).

This means that $\Sph^2_+ \backslash S_+^1$ intersects only 
the regular part of the boundary fibers $L_+$ and $L_-$
and therefore $\Sph^2_+ \backslash S_+^1 \subset \V$. 

The intersection of $\Sph^2_+$ with the middle fiber $L_0$ forms the family 
\begin{equation}\label{eq:F_+}
    \F_+=\Sph^2_+ \cap L_0 =\{s(p_+, r_k) \}_{k=0}^{k_0}, \,\,\, \textnormal{where} 
    \,\,\, r_k= \big(1+2k\big)\frac{\pi}{2k_0}.
\end{equation}

 \begin{figure}[h]
\centering \includegraphics[width=0.4\textwidth]{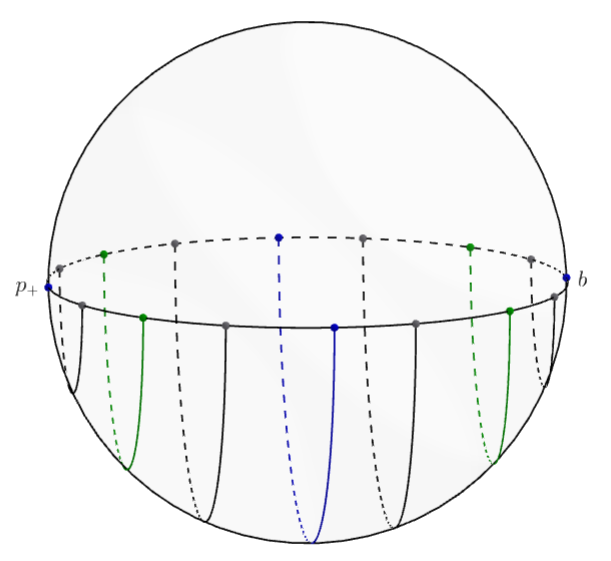}
\caption{Decomposition of $\Sph^2_+$ into fibers: 
    $L_0$ (black curve), $L_1$ (blue curve), and $L_{-1}$ (green curve).}
\label{fig:halfS2_fibers}
\end{figure}

We can repeat the construction for the remaining boundary points.
Namely, consider now the singular point $p_-\in B$.
The normal space $N_{p_-}L_-$ to the fiber $L_-$ at $p_-$ is a two-dimensional half-plane.
Its image under the exponential map is a closed hemisphere 
$\Sph^2_- \subset  \Sph^2$  bounded by the great circle~$S_-^1$.

Similarly to the above, the points of the fiber $L_0$ 
form a semicircle $s(p_-, a)$ of radius $a$ centered at $p_-$ on $\Sph^2_-$.
The horizontal geodesic $\gamma(t)$ to $s(p_-, a)$ at the point $x$ satisfies 
$\gamma(a) = p_-$ and  $\gamma(t) \subset \V$ for $t\in [0, a)$.
At the same time, all geodesics meet at the point $d=\gamma(a-\pi)$ for every $x\in s(p_-, a)$.
Since $a=\pi/2k_0$, from~(\ref{eq:quasigeod_S2}) we obtain, that 
\begin{itemize}
      \item $d=q_+$ if $2k_0-1 \equiv 1 \pmod 4$ 
      \item $d=q_-$ if $2k_0-1 \equiv 3 \pmod 4$.
  \end{itemize}

The intersection of $\Sph^2_-$ with the middle fiber $L_0$ forms the family 
\begin{equation}\label{eq:F_-}
    \F_-=\Sph^2_- \cap L_0 =\{s(p_-, r_k) \}_{k=0}^{k_0}, \,\,\, \textnormal{where} 
    \,\,\, r_k= \big(1+2k\big)\frac{\pi}{2k_0}.
\end{equation}

Moreover, the open hemisphere $\Sph^2_- \backslash S_-^1$ is also a transnormal region.
Indeed, from (\ref{eq:fibersSph+}) follows that for every $k<k_0$, 
the points $\gamma\left(-a-4ka \right) \in L_+$ and they form the semicircle 
$s\big(p_-, r_k\big)$ of the radius $r_k=(2+4k)\cdot a$ and center $p_-$ in $\Sph^2_-$.
The points $\gamma\left(-3a-4ka \right)\in L_-$ and form  
$s(p_-, r_k)$  of radius $r_k=4(k+1)\cdot a$ in $\Sph^2_-$.
Therefore,  $\Sph^2_-\backslash S_-^1$ do not contains points of $B$.

We claim that the boundary spheres $S^1_+$ and $S^1_-$ coincide. 
Otherwise, they intersect in two antipodal points and 
$S^1_-\cap (\Sph^2_+ \backslash S^1_+) \neq \emptyset$. 
Since the  points $p_-$ and $d$ are antipodal points on $S^1_-$
it implies that one of them lies in $\Sph^2_+ \backslash S^1_+$.
This leads to a contradiction to the fact that  $\Sph^2_+ \backslash S^1_+ \subset \V$.
Therefore,  $p_-$ and $d$ are points of the intersection of $S^1_+$ and $S^1_-$. 
In the same way, one can prove that $p_+$ and $b$ should also be the points of 
the intersection of $S^1_+$ and $S^1_-$.
Therefore, the boundary fibers $L+$ and $L_-$ intersect, resulting in a contradiction.

Hence, we have $S^1:=S^1_+=S^1_-$ that divides the ambient space $\Sph^2$ in two 
hemispheres $\Sph^2_+ $ and $\Sph^2_-$.
From~(\ref{eq:F_+}) and~(\ref{eq:F_-}), 
the  middle fiber $L_0$ is then given the union $\F_+\cup\F_-$.
It remains to determine a rotation of the hemisphere $\Sph^2_-$
such that the gluing along $S^1$ ensures that  $\F_+\cup\F_-$ is a connected curve.

In general, from~(\ref{eq:fibersSph+}) we know that 
the distance between $p_+$ and $p_-$ on the circle $S^1$ equals 
$$dist(p_+, p_-)=\big(2+4k \big)\frac{\pi}{2k_0} = \big(1+2k \big) \frac{\pi}{k_0}.$$
 Therefore, the union $\F_+\cup\F_-$ is a connected fiber $L_0$ if and only if
 $(1+2k)$ is coprime to $k_0$. 
Then $L_0$ is exactly a $C^{1,1}$ curve $\sigma_s$ from~(\ref{eq:sigma_s}),
where $s=1+2k$ and $a=\pi/2k_0$. 
By construction, the submetry $P$ is the signed distance to~$\sigma_s$
(see Figure~\ref{fig:S2_fibers}). 

 \begin{figure}[h]
\centering \includegraphics[width=0.5\textwidth]{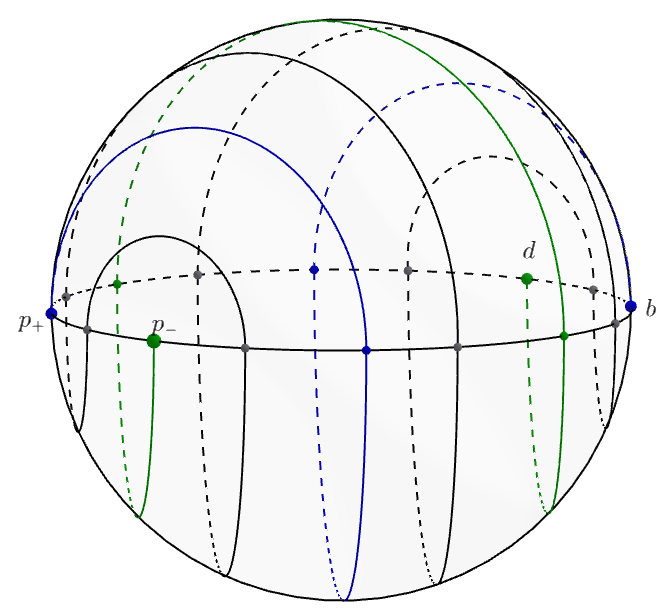}
\caption{Decomposition of $\Sph^2$ into fibers: 
    $L_0$ (black curve), $L_1$ (blue curve), and $L_{-1}$ (green curve).}
\label{fig:S2_fibers}
\end{figure}

\end{proof}

\newpage
\bibliographystyle{alpha}
\bibliography{bibliography}

\end{document}